\newtheorem{theorem}{Theorem}[section]
\newtheorem{lemma}[theorem]{Lemma}
\newtheorem{problem}[theorem]{Problem}
\newtheorem{proposition}[theorem]{Proposition}
\newtheorem{corollary}[theorem]{Corollary}
\theoremstyle{definition}
\theoremstyle{remark} 
\theoremstyle{remark} 
\theoremstyle{remark}
\newcommand{\N}{\mathbb{N}}
\newcommand{\R}{\mathbb{R}}
\newcommand{\Id}{\operatorname{Id}}
\newcommand{\complemented}{\stackrel{c}{\hookrightarrow}}
\newcommand{\dotcup}{\ensuremath{\mathaccent\cdot\cup}}
\theoremstyle{remark}
\newtheorem{remark}[theorem]{Remark}
\numberwithin{equation}{section}
\begin{document}
\title[Complementations in $C(K,X)$ and $\ell_\infty(X)$]{Complementations in $C(K,X)$ and $\ell_\infty(X)$}
%\date{ }
\author{Leandro Candido}
\address{Federal de S\~ao Paulo, ICT-UNIFESP, S\~ao Jos\'e dos Campos-SP, Brazil}
\email{\texttt{leandro.candido@unifesp.br}}+
\thanks{}

%    General info
\subjclass{Primary 46E15, 46E40; Secondary 46B25,46A45}
%\date{January 1, 1994 and, in revised form, June 22, 1994.}

%\dedicatory{This paper is dedicated to our advisors.}

\keywords{$C(K,X)$ spaces, $\ell_\infty$-sums of Banach spaces}

\begin{abstract} 
We investigate the geometry of $C(K,X)$ and $\ell_{\infty}(X)$ spaces through complemented subspaces
of the form $\left(\bigoplus_{i\in \varGamma}X_i\right)_{c_0}$. Concerning the geometry of $C(K,X)$ spaces we extend some results of 
D. Alspach and E. M. Galego from \cite{AlspachGalego}. On $\ell_{\infty}$-sums of Banach spaces we prove that if $\ell_{\infty}(X)$ has a complemented subspace 
isomorphic to $c_0(Y)$, then, for some $n \in \N$, $X^n$ has a subspace isomorphic to $c_0(Y)$. We further prove the following: 
\begin{itemize}
    \item[(1)] If $C(K)\sim c_0(C(K))$ and $C(L)\sim c_0(C(L))$ and $\ell_{\infty}(C(K))\sim \ell_{\infty}(C(L))$, then $K$ and $L$ have the same cardinality. 
    \item[(2)]if $K_1$ and $K_2$ are infinite metric compacta, then $\ell_{\infty}(C(K_1))\sim \ell_{\infty}(C(K_2))$ if and only if $C(K_1)$ is isomorphic to $C(K_2)$.
\end{itemize}

\end{abstract}

\maketitle

\section{Introduction}

In this paper we are mainly interested in investigating the geometry of Banach spaces of the form $C(K,X)$, the space of continuous functions defined on a Hausdorff compactum $K$ with values in a Banach space $X$, and the geometry of the space of bounded sequences in a Banach space $X$, $\ell_{\infty}(X)$. More generally speaking, our investigation lies in the class $C_b(K,X)$ of the bounded continuous functions defined in a Hausdorff space $K$ with values in $X$.   

Our research in this field can be divided into two interrelated parts. The first part of our investigation concerns $C(K,X)$ spaces and goes back to the classical and celebrated Cembranos-Freniche theorem \cite{Cem} which states that $C(K,X)$ has a 
complemented subspace isomorphic to $c_0$ whenever $K$ is an infinite Hausdorff compactum and $X$ is an infinite dimensional Banach space. The Cembranos-Freniche theorem in the past decades has influenced many lines of research, for recent examples  \cite{CK} \cite{ViniGale} \cite{CGS}, and was extended in many directions \cite{SaabSaab} \cite{Ryan} \cite{GaHa}. Galego and Hagler in \cite{GaHa}, among other results, isolated conditions on $K$ and $X$ implying that $C(K,X)$ will have a complemented subspace isomorphic to $c_0(\varGamma)$, where $\varGamma$ is an infinite set, not necessarily countable. In this paper, by using some ideas of \cite{AlspachGalego} and \cite{GaHa} we obtain yet another Cembranos-Freniche type theorem, see Theorem \ref{mainTheorem1}, that generalizes Galego-Hagler's aforementioned result for zero-dimensional compacta.

The second part of our investigation is more concerned with the isomorphic theory of $\ell_{\infty}(C(K))$ spaces. 
For Hausdorff compacta $K$ and $L$, a natural question that arises is which properties are transferred from $L$ to $K$ if  
$C(K)$ is isomorphic to $C(L)$. For recent striking contributions in this setting see \cite{Plebanek1} and \cite{Plebanek2}. In the context of 
$\ell_{\infty}$-sums of Banach spaces, our investigation revolves around the following related problem:

\begin{problem}\label{mainproblem}
If $\ell_{\infty}(C(K))$ is isomorphic to $\ell_{\infty}(C(L))$ and $L$ has some property $\mathcal{P}$. Does $K$ have property $\mathcal{P}$?
\end{problem}

The class of $\ell_\infty$-sums of Banach spaces is very subtle concerning the geometry of its members. On the one hand it may have several distinct complemented subspaces as proved by W. B. Johnson \cite{BillJohnson}: there exists a sequence $(X_n)_n$ of finite dimensional Banach spaces such that if $X$ is a separable Banach space, then $\left(\bigoplus_{n\in \N} X_n\right)_{\ell_{\infty}}$ has a complemented subspace isometric to $X^*$. On the other hand,
if $X$ is a finite dimensional space, then $\ell_{\infty}(X)$ is isomorphic to $\ell_\infty$ and by a classical result of Lindenstrauss \cite{Lind}, any infinite dimensional complemented subspace of $\ell_{\infty}(X)$ is isomorphic to $\ell_{\infty}$.

By using ideas from Khmyleva \cite{Khmyleva}, we isolate some conditions that will allow us to address the Problem \ref{mainproblem}. In one of our main results in this direction, we prove that whenever $K_1$ and $K_2$ are infinite metric compacta, then $\ell_{\infty}(C(K_1))$ is isomorphic to $\ell_{\infty}(C(K_2))$ if and only if $C(K_1)$ is isomorphic to $C(K_2)$. This relates to another result of Galego \cite[Corollary 5.8]{galego1}. 

The paper is organized as follows. In Section \ref{Sec-Terminology} we fix some general terminology to be used along the paper. In Section \ref{Sec-C(K,X)} we will establish our Cembranos-Freniche type theorem and present some applications. In Section \ref{Sec-ellinfty(X)} we conduct our investigation on the isomorphic theory of $\ell_{\infty}(C(K))$ spaces and present our main results in this setting. Finally, in Section \ref{Sec-FurtherApp} we address a related problem that involves results from both Sections \ref{Sec-C(K,X)} and \ref{Sec-ellinfty(X)}.

\section{Terminology}
\label{Sec-Terminology}

For a compact Hausdorff space $K$ and a Banach space $X$, $C(K,X)$ denotes the Banach space of all continuous functions $f:K\to X$, equipped with the norm $\|f\|=\sup_{t\in K}\|f (t)\|$. When $X=\R$, this space will be denoted by $C(K)$. Given a family of Banach spaces $(X_i)_{i\in \varGamma}$ we denote by $\left(\bigoplus_{i\in \varGamma}X_i\right)_{\ell_\infty}$ and $\left(\bigoplus_{i\in \varGamma}X_i\right)_{c_0}$ their $\ell_{\infty}$-sum and $c_0$-sum respectively. When $X_i=X$ for each $i\in \varGamma$, these sums will be denoted by $\ell_{\infty}(\varGamma,X)$ and $c_0(\varGamma,X)$ respectively.
If moreover $\varGamma=\N$ (and $X=\R$), we write $\ell_{\infty}(X)$ and $c_0(X)$ ($\ell_{\infty}$ and $c_0$).

Whenever $K$ is a Tychonov space, $\beta K$ denotes its Stone-\v Cech compactification. We denote $\beta \N^*=\beta\N\setminus \N$. It is well known that $C(\beta \N^*)\cong \ell_{\infty}/c_0$. For ordinal spaces, that is, ordinals endowed with the usual order topology, we will use the classical interval notation. 
The cardinality of any set $\varGamma$ will be denoted by $|\varGamma|$. The first infinite cardinal will be denoted by $\omega$, the first uncountable cardinal by $\omega_1$ and the cardinality of the continuum bu $\mathfrak{c}$. For topological spaces $K$ and $L$, we write 
$K\approx L$ to indicate that they are homeomorphic.

For Banach spaces $X$ and $Y$, a bounded linear operator $T : X\to Y$ is said to be a
linear embedding of $X$ into $Y$ if there is $C>0$ such that  $C\|u\| \leq \|T(u)\|$ for all $u \in X$. We will often write $X\hookrightarrow Y$ to indicate that such embedding exists. If a linear embedding $T:X\to Y$ is also surjective we say that $X$ and $Y$ are isomorphic and write $X\sim Y$, and if additionally $T$ is an isometry we write  $X\cong Y$. 

A bounded linear operator $P:X\to X$ is said to be a projection if $P^2(u)=P(u)$ for all $u\in X$. In this case we say that the image $P[X]$ is a complemented subspace of $X$.
We will often write $Y\complemented X$ to indicate that $Y$ is isomorphic to a complemented subspace of $X$. The following elementary fact will play an important role in this paper. 

\begin{proposition}\label{fact1}
A Banach space $Y$ has a complemented subspace isomorphic to $X$ if and only if there are bounded linear operators $T:X\to Y$ and $S:Y\to X$ such that $S\circ T = \Id_X$ is the identity on $X$. In this case $T$ is a linear embedding and  $T\circ S$ is a projection onto $T[Y]$.
\end{proposition}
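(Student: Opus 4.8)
The plan is to prove the two implications separately, and then to extract the two ``In this case'' assertions directly from the construction used in the reverse direction.

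For the forward implication, I would start from a projection $P:Y\to Y$ whose range $Z=P[Y]$ is isomorphic to $X$, say via an isomorphism $U:X\to Z$. Writing $j:Z\hookrightarrow Y$ for the inclusion, I would set $T=j\circ U:X\to Y$ and $S=U^{-1}\circ P:Y\to X$, both of which are bounded. The computation $S\circ T = U^{-1}\circ P\circ j\circ U$ then simplifies because $j\circ U$ maps $X$ into $Z=P[Y]$ and $P$ fixes $Z$ pointwise, so $P\circ j\circ U = j\circ U$, and hence $S\circ T = U^{-1}\circ U = \Id_X$, as required.

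For the reverse implication, suppose bounded operators $T:X\to Y$ and $S:Y\to X$ with $S\circ T=\Id_X$ are given. I would first observe that $T$ is a linear embedding: since $\|u\|=\|S(T(u))\|\le \|S\|\,\|T(u)\|$ for every $u\in X$, one gets $\|T(u)\|\ge \|S\|^{-1}\|u\|$ (the degenerate case $S=0$ forces $X=\{0\}$ and is trivial). This lower bound makes $T$ injective with closed range, so $T$ is an isomorphism of $X$ onto $T[X]$. Next I would set $Q=T\circ S$ and verify idempotency directly, $Q^2=T\circ(S\circ T)\circ S=T\circ\Id_X\circ S=Q$, so that $Q$ is a bounded projection on $Y$. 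Finally, since $S\circ T=\Id_X$ forces $S$ to be surjective onto $X$, the range of $Q$ is $Q[Y]=T[S[Y]]=T[X]$; thus $T[X]$ is a complemented subspace of $Y$ isomorphic to $X$, which simultaneously establishes both concluding assertions.

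I do not expect any genuine obstacle here, as the statement is an elementary bookkeeping lemma; the only points requiring a little care are the surjectivity of $S$ (needed to identify the range of the projection $Q$ with $T[X]$ rather than a smaller subspace) and the lower bound on $T$ (needed to guarantee that $T[X]$ is closed and genuinely isomorphic to $X$, rather than merely a dense or non-closed image).
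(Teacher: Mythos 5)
Your proof is correct and complete. The paper itself offers no proof of this proposition --- it is stated as an elementary fact --- and your argument (isomorphism plus projection in the forward direction; the bound $\|u\|\le\|S\|\,\|T(u)\|$, the idempotency of $Q=T\circ S$, and the surjectivity of $S$ in the reverse direction) is exactly the standard reasoning the paper implicitly relies on. One small point in your favor: you correctly identify the range of the projection as $T[X]$, whereas the statement's phrase ``onto $T[Y]$'' is evidently a typo, since $T$ is defined on $X$.
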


We will also adopt other standard notational conventions. For a Banach space $X$, $B_X$ stands for its closed unit ball, $S_X$ its sphere and $X^*$ its (topological) dual space. 
Any other standard terminology for Banach space theory and set-theoretic topology we will adopt as in \cite{Biorthog}. 

\section{Complementations in $C(K,X)$}
\label{Sec-C(K,X)} 

Our first main result is the following Cembranos-Freniche type theorem obtained by joining ideas of \cite[Corollary 4.6]{GaHa} and \cite[Proposition 5.5]{AlspachGalego}. 

\begin{theorem}\label{mainTheorem1} Let $K$ be a Hausdorff compactum and $X$ be a Banach space. Assume, for some infinite cardinal $\kappa$, a collection $(K_\alpha)_{\alpha<\kappa}$ of pairwise disjoint non-empty clopen subsets of $K$, and a family of projections $(P_\alpha)_{\alpha<\kappa}$ defined on $X$, $P_\alpha[X]=X_\alpha$, such that the formula $x \mapsto (P_\alpha(x))_{\alpha<\kappa}$ defines a bounded linear operator  from $X$ to 
$\left(\bigoplus_{\alpha<\kappa}X_{\alpha}\right)_{c_0}$. Then $C(K,X)$ has a complemented subspace isomorphic to $\left(\bigoplus_{\alpha<\kappa}C(K_{\alpha},X_{\alpha})\right)_{c_0}$.
\end{theorem}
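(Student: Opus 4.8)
The plan is to use Proposition~\ref{fact1}: I will construct bounded linear operators
$$T:\left(\bigoplus_{\alpha<\kappa}C(K_\alpha,X_\alpha)\right)_{c_0}\to C(K,X)\quad\text{and}\quad S:C(K,X)\to\left(\bigoplus_{\alpha<\kappa}C(K_\alpha,X_\alpha)\right)_{c_0}$$
with $S\circ T=\Id$, which by Proposition~\ref{fact1} yields the complemented copy. For brevity write $Z=\left(\bigoplus_{\alpha<\kappa}C(K_\alpha,X_\alpha)\right)_{c_0}$. The main structural input is that each $K_\alpha$ is \emph{clopen}, so there are natural restriction and extension-by-zero maps between $C(K,X)$ and $C(K_\alpha,X_\alpha)$, and the hypothesis that $x\mapsto(P_\alpha(x))_\alpha$ maps $X$ boundedly into $\left(\bigoplus_\alpha X_\alpha\right)_{c_0}$ is exactly what is needed to keep the $c_0$-structure under control when I reassemble the pieces.

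First I would define the embedding $T$. Given $g=(g_\alpha)_{\alpha<\kappa}\in Z$ with $g_\alpha\in C(K_\alpha,X_\alpha)$, I set $T(g)$ to be the function on $K$ that equals $g_\alpha$ on each clopen $K_\alpha$ and equals $0$ off $\bigcup_\alpha K_\alpha$. Because the $K_\alpha$ are pairwise disjoint clopen sets and $\|g_\alpha\|\to 0$ (in the $c_0$-sense), $T(g)$ is continuous: at any point $t\notin\bigcup_\alpha K_\alpha$ a neighborhood argument using $\|g_\alpha\|\to 0$ gives continuity there, while at points inside some $K_\alpha$ continuity is immediate from clopenness. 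One checks $\|T(g)\|=\sup_\alpha\|g_\alpha\|=\|g\|_Z$, so $T$ is in fact an isometric embedding. Here the $c_0$-decay is essential; it is precisely what forces $T(g)$ to be continuous at the ``accumulation'' points of the family $(K_\alpha)$.

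Next I would define $S$. For $f\in C(K,X)$, restrict $f$ to each $K_\alpha$ and then apply the projection $P_\alpha$ pointwise, setting $S(f)_\alpha=P_\alpha\circ(f\restriction K_\alpha)\in C(K_\alpha,X_\alpha)$. The composition $S\circ T$ is the identity on $Z$: on $K_\alpha$ we have $T(g)\restriction K_\alpha=g_\alpha$ with values already in $X_\alpha$, and $P_\alpha$ restricted to $X_\alpha$ is the identity, so $S(T(g))_\alpha=P_\alpha\circ g_\alpha=g_\alpha$. The one genuinely nontrivial point---and the step I expect to be the main obstacle---is verifying that $S$ actually lands in the $c_0$-sum and is bounded, rather than merely in the $\ell_\infty$-sum. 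For a fixed $f$, the value $\|S(f)_\alpha\|=\sup_{t\in K_\alpha}\|P_\alpha(f(t))\|$ must tend to $0$ in the $c_0$-sense; this does not follow from boundedness of the individual $P_\alpha$ but must be extracted from the hypothesis that $x\mapsto(P_\alpha(x))_\alpha$ maps into $\left(\bigoplus_\alpha X_\alpha\right)_{c_0}$.

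To handle this obstacle I would argue by contradiction via a compactness/gliding-hump scheme: if for some $\varepsilon>0$ there were infinitely many indices $\alpha$ and points $t_\alpha\in K_\alpha$ with $\|P_\alpha(f(t_\alpha))\|\geq\varepsilon$, then by compactness of $K$ the net $(t_\alpha)$ has a cluster point $t_0$, and the continuity of $f$ would force the values $f(t_\alpha)$ to cluster at $f(t_0)=:x_0$; but then $(P_\alpha(x_0))_\alpha$ could not lie in $c_0$, since $\|P_\alpha(x_0)\|$ would be bounded below along a subnet (using uniform boundedness of the $P_\alpha$, which follows from the hypothesized bounded $c_0$-valued map together with the uniform boundedness principle), contradicting the hypothesis applied to the single vector $x_0$. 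This argument simultaneously delivers both that $S(f)\in Z$ and the uniform bound $\|S\|\leq\sup_\alpha\|P_\alpha\|<\infty$, completing the verification that $S$ and $T$ satisfy the requirements of Proposition~\ref{fact1}.
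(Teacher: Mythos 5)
Your proposal is correct and follows essentially the same route as the paper: the same extension-by-zero operator $T$, the same restrict-and-project operator $S$ with $S\circ T=\Id$, and the same appeal to Proposition~\ref{fact1}. The only difference is that where the paper dismisses the fact that $S$ lands in the $c_0$-sum as ``readily seen'' from the hypothesis on $(P_\alpha)_{\alpha<\kappa}$, you supply an explicit cluster-point/compactness argument for it, which is a legitimate (and welcome) filling-in of that detail rather than a different proof.
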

\begin{proof} Let $Y=\left(\bigoplus_{\alpha<\kappa}C(K_{\alpha},X_\alpha)\right)_{c_0}$. Let $S:C(K,X)\to Y$ be the map defined by the formula
\[S(f)=(P_\alpha \circ f\restriction_{K_{\alpha}})_{\alpha<\kappa}.\]

From the assumptions on $(P_\alpha)_{\alpha<\kappa}$ and the Banach–Steinhaus theorem, it is readily seen that $S$ is a well defined linear operator with $\|S\|=\sup_{\alpha<\kappa}\|P_\alpha\|<\infty$. 

On the other hand, for each $g=(g_\alpha)_{\alpha<\kappa}\in Y$ let $\varphi_g:K\to X$ be defined by 
\begin{align*}
\varphi_g(t)= \left\{
\begin{array}{ll}
0 &\text{ if }t\in K\setminus \cup_{\alpha<\kappa}K_{\alpha}.\\
g_{\alpha}(t) &\text{ if }t\in K_{\alpha}.
\end{array} \right.
\end{align*}

We claim that each $\varphi_{g}$ is continuous in $K$. Indeed, let $t\in K$ be arbitrary and let $(t_j)_{j\in J}$ be any net in $K$ converging to $t$. Since each $K_{\alpha}$ is a clopen set, from the continuity of each $g_\alpha$ it is evident that 
$\varphi_{g}(t_j)\to \varphi_g(t)$ if $t\in \cup_{\alpha<\kappa}K_{\alpha}$. Suppose that $t\in K\setminus \cup_{\alpha<\kappa}K_{\alpha}$ and let $\epsilon>0$ be arbitrary. Recalling that $(g_{\alpha})_{\alpha<\kappa}\in Y$, the set $F=\{\alpha<\kappa:\|g_{\alpha}\|\geq \epsilon\}$ is finite. There is $j_0\in J$ such that $t_j\in K\setminus \bigcup_{\alpha\in F}K_{\alpha}$ whenever $j\geq j_0$. Then, $\|\varphi_g(t_j)\|<\epsilon$ whenever 
$j\geq j_0$ and we deduce that $\varphi_{g}(t_j)\to \varphi_{\alpha}(t)=0$, establishing our claim. We define an operator $T:Y\to C(K,X)$ by $T(g)= \overline{\varphi_g}$. It is easily seen that $T$ is a well defined linear operator. Moreover, $\|T\|=1$.

It is now immediate from the construction of $S$ and $T$ that 
\[S\circ T(g)=(P_\alpha\circ T(g)\restriction_{K_{\alpha}})=(g_\alpha)_\alpha=g,\]
for each $g=(g_\alpha)_\alpha\in Y$. From Proposition \ref{fact1} we deduce that $T\circ S$ is a projection of $C(K,X)$ onto an isomorphic copy of $Y$.
\end{proof}

Given a cardinal $\kappa$, see \cite[Definition 4.3]{GaHa}, we recall the concept of $\kappa$-Josefson-Nissenzweig property (in short, $JN_\kappa$ property). A Banach 
space $X$ is said to have the $JN_\kappa$ property whenever admits a $JN_\kappa$ family, that is, a family of functionals $(\varphi_{\alpha})_{\alpha<\kappa}$ in $S_{X^*}$ such that $(\varphi_\alpha(x))_{\alpha<\kappa}\in c_0(\kappa)$ for every $x\in X$. The name of this property comes from the celebrated theorem of Josefson-Nissenzweig, see \cite[Chapter XII]{Diestel}, that in our terminology states that every infinite dimensional Banach has the $JN_\omega$ property. 

\begin{remark}In contrast with Josefson-Nissenzweig theorem, Galego and Hagler in \cite{GaHa} observed that, by a result of Todor\v cevi\'c \cite[Corollary 6]{Todor} obtained under the assumption of an extra set-theoretic assumption, it is relatively consistent with ZFC that every Banach space $X$ with density character $\omega_1$ has the $JN_{\omega_1}$ property. 
In the opposite direction, under the assumption of another extra set-theoretic principle, it is possible to construct Banach spaces of the form $C(K)$ with density character $\omega_1$
that have not the $JN_{\omega_1}$ property, see \cite{CK} and \cite[Theorem 4.2 and Remark 4.3]{candido2}.
\end{remark}

\begin{proposition}\label{auxprojection}Let $X$ be a Banach space and $\kappa$ be an infinite cardinal. If $X$ has the $JN_\kappa$ property, then there is a family of projections 
$(P_\alpha)_{\alpha<\kappa}$ defined in $X$, such that $P_{\alpha}[X]=X_\alpha\cong \R$ and the formula $x \mapsto (P_\alpha(x))_{\alpha<\kappa}$ defines a bounded linear operator  from $X$ to $\left(\bigoplus_{\alpha<\kappa}X_{\alpha}\right)_{c_0}\cong c_0(\kappa)$.
\end{proposition}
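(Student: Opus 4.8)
The plan is to build each $P_\alpha$ as a rank-one projection manufactured from a single member $\varphi_\alpha$ of a $JN_\kappa$ family, so that the coordinate map $x \mapsto (P_\alpha(x))_{\alpha<\kappa}$ inherits its $c_0$-valued behaviour directly from the defining property $(\varphi_\alpha(x))_{\alpha<\kappa}\in c_0(\kappa)$. First I would invoke the hypothesis: let $(\varphi_\alpha)_{\alpha<\kappa}\subseteq S_{X^*}$ be a $JN_\kappa$ family, so $(\varphi_\alpha(x))_{\alpha<\kappa}\in c_0(\kappa)$ for every $x\in X$. Since $\|\varphi_\alpha\|=1$, for each $\alpha$ I can choose a near-norming vector $x_\alpha\in S_X$; replacing $x_\alpha$ by $-x_\alpha$ if necessary, I may assume $\varphi_\alpha(x_\alpha)>\tfrac12$. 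Then I would set
\[
P_\alpha(x)=\frac{\varphi_\alpha(x)}{\varphi_\alpha(x_\alpha)}\,x_\alpha,\qquad X_\alpha:=P_\alpha[X]=\R x_\alpha.
\]

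Next I would verify the elementary algebraic and metric properties. A direct computation using $P_\alpha(x_\alpha)=x_\alpha$ gives $P_\alpha^2=P_\alpha$, so each $P_\alpha$ is a projection whose range $X_\alpha=\R x_\alpha$ is one-dimensional; since $\|x_\alpha\|=1$, the map $t x_\alpha\mapsto t$ is an isometry $X_\alpha\cong\R$, and assembling these isometries coordinatewise identifies $\left(\bigoplus_{\alpha<\kappa}X_\alpha\right)_{c_0}\cong c_0(\kappa)$. The uniform bound is equally routine: from $\|x_\alpha\|=1$, $|\varphi_\alpha(x)|\le\|x\|$ and $\varphi_\alpha(x_\alpha)>\tfrac12$ I obtain $\|P_\alpha(x)\|=|\varphi_\alpha(x)|/\varphi_\alpha(x_\alpha)\le 2\|x\|$, hence $\sup_{\alpha<\kappa}\|P_\alpha\|\le 2$.

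Finally I would check that the coordinate map genuinely lands in the $c_0$-sum and is bounded, which is the crux of the statement, though with this construction it is almost immediate. For fixed $x$ the estimate $\|P_\alpha(x)\|\le 2|\varphi_\alpha(x)|$ shows $\{\alpha:\|P_\alpha(x)\|\ge\epsilon\}\subseteq\{\alpha:|\varphi_\alpha(x)|\ge\epsilon/2\}$, and the latter set is finite because $(\varphi_\alpha(x))_{\alpha<\kappa}\in c_0(\kappa)$; thus $(P_\alpha(x))_{\alpha<\kappa}\in\left(\bigoplus_{\alpha<\kappa}X_\alpha\right)_{c_0}$, and its norm $\sup_{\alpha}\|P_\alpha(x)\|\le 2\sup_\alpha|\varphi_\alpha(x)|\le 2\|x\|$ confirms boundedness of the operator. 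The only place any care is needed is the choice of the norming vectors $x_\alpha$: it is precisely the uniform lower bound $\varphi_\alpha(x_\alpha)>\tfrac12$ that both keeps the projections uniformly bounded and transfers the $c_0$ decay of $(\varphi_\alpha(x))_\alpha$ to $(P_\alpha(x))_\alpha$. No compactness or genuine attainment of the norm is required, only its approximate attainment, which is available in every Banach space.
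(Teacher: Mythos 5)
Your proposal is correct and follows essentially the same route as the paper: both build rank-one projections $P_\alpha$ from the $JN_\kappa$ functionals using approximately norming vectors, and both transfer the $c_0(\kappa)$ decay of $(\varphi_\alpha(x))_\alpha$ directly to $(P_\alpha(x))_\alpha$ with a uniform bound of $2$. The only cosmetic difference is the normalization: the paper picks $x_\alpha$ with $\varphi_\alpha(x_\alpha)=1$ and $\|x_\alpha\|<2$ so that $P_\alpha(x)=\varphi_\alpha(x)\,x_\alpha$, whereas you keep $x_\alpha\in S_X$ and divide by $\varphi_\alpha(x_\alpha)>\tfrac12$, which amounts to the same operators up to rescaling of the chosen vectors.
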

\begin{proof}
Let $(\varphi_{\alpha})_{\alpha<\kappa}$ be a $JN_\kappa$ family for $X$. For each $\alpha<\kappa$ let $x_\alpha\in X$ be such that $\|x_\alpha\|<2$ and $\varphi_{\alpha}(x_{\alpha})=1$ and define $P_\alpha:X\to X$ by $P_\alpha(x)=\varphi_{\alpha}(x)\cdot x_{\alpha}$. It is clear that 
$(P_{\alpha})_{\alpha<\kappa}$ is a family of projections of $X$ onto $P_{\alpha}[X]=\mathrm{span}\{x_\alpha\}\cong \R$ and, by the properties of $(\varphi_{\alpha})_{\alpha<\kappa}$, $X\ni x\mapsto (P_{\alpha}(x))_{\alpha<\kappa}\in\left(\bigoplus_{\alpha<\kappa}X_\alpha\right)_{c_0}$ is a well defined bounded linear operator.
\end{proof}

From Theorem \ref{mainTheorem1} and Proposition \ref{auxprojection} we obtain:

\begin{corollary}\label{VersionGaHa}Let $K$ be Hausdorff compactum and let $(K_{\alpha})_{\alpha<\kappa}$ be a collection of pairwise disjoint non-empty clopen compact subsets of $K$. If $X$ is a Banach space having the $JN_\kappa$ property, then $C(K,X)$ has a complemented subspace isomorphic to $c_0(C(K_\alpha))$.
\end{corollary}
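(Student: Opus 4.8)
The plan is to obtain this corollary as an immediate composition of the two preceding results, with only a routine final identification. The heavy lifting has already been done: Theorem \ref{mainTheorem1} produces the complemented $c_0$-sum from any family of projections satisfying the stated summability condition, and Proposition \ref{auxprojection} manufactures exactly such a family out of the $JN_\kappa$ property. So the work is essentially to check that the hypotheses dovetail.

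First I would apply Proposition \ref{auxprojection} to $X$ and $\kappa$. Since $X$ has the $JN_\kappa$ property, this yields a family of (rank-one) projections $(P_\alpha)_{\alpha<\kappa}$ on $X$ with $P_\alpha[X]=X_\alpha=\Span\{x_\alpha\}\cong\R$ such that $x\mapsto(P_\alpha(x))_{\alpha<\kappa}$ is a bounded linear operator from $X$ into $\left(\bigoplus_{\alpha<\kappa}X_\alpha\right)_{c_0}$. This is precisely the condition on the projections demanded by Theorem \ref{mainTheorem1}. Observing that each $K_\alpha$, being closed in the compactum $K$, is compact, I would then feed $(K_\alpha)_{\alpha<\kappa}$ together with $(P_\alpha)_{\alpha<\kappa}$ into Theorem \ref{mainTheorem1}, concluding that $C(K,X)$ has a complemented subspace isomorphic to $\left(\bigoplus_{\alpha<\kappa}C(K_\alpha,X_\alpha)\right)_{c_0}$.

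It remains to identify this sum with $c_0(C(K_\alpha))$, i.e.\ with $\left(\bigoplus_{\alpha<\kappa}C(K_\alpha)\right)_{c_0}$. Because each $X_\alpha$ is one-dimensional, the map $g\mapsto g\cdot x_\alpha$ is an isomorphism of $C(K_\alpha)$ onto $C(K_\alpha,X_\alpha)$ with $\|g\cdot x_\alpha\|=\|x_\alpha\|\,\|g\|$. From the construction in Proposition \ref{auxprojection} one has $\varphi_\alpha(x_\alpha)=1$ with $\varphi_\alpha\in S_{X^*}$ and $\|x_\alpha\|<2$, hence $1\le\|x_\alpha\|<2$; these isomorphism constants are therefore uniformly bounded away from $0$ and $\infty$. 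Taking the maps $g\mapsto g\cdot x_\alpha$ coordinatewise thus induces an isomorphism between the two $c_0$-sums, which gives the corollary.

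I do not anticipate any genuine obstacle, as the substantive arguments reside in the two results already established; the only point requiring a moment's care is the last one, namely verifying that the uniform bound $1\le\|x_\alpha\|<2$ makes the coordinatewise map a well-defined isomorphism of the $c_0$-sums (carrying $c_0$-null families to $c_0$-null families in both directions), and not merely a coordinatewise isomorphism.
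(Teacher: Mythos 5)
Your proposal is correct and follows exactly the paper's route: the paper derives this corollary directly from Theorem \ref{mainTheorem1} and Proposition \ref{auxprojection} with no further written argument. Your additional verification that the coordinatewise maps $g\mapsto g\cdot x_\alpha$, with constants uniformly controlled by $1\le\|x_\alpha\|<2$, yield an isomorphism $\left(\bigoplus_{\alpha<\kappa}C(K_\alpha,X_\alpha)\right)_{c_0}\sim\left(\bigoplus_{\alpha<\kappa}C(K_\alpha)\right)_{c_0}$ is precisely the routine identification the paper leaves implicit, and it is done correctly.
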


Recalling that a $C(K)$ space has a subspace isomorphic to $c_0(\kappa)$ if and only if $K$ has a collection of cardinality $\kappa$, consisting of pairwise disjoint non-empty open sets, Corollary \ref{VersionGaHa} provides an extension of \cite[Corollary 4.6]{GaHa} for zero-dimensional compacta.

\begin{corollary} 
$C(\beta \N^*\times \beta \N)\cong C(\beta\N^*,\ell_{\infty})\cong  C(\beta\N,\ell_{\infty}/c_0)$ has a complemented subspace isomorphic to $c_0(\mathfrak{c},\ell_{\infty}/c_0)$.
\end{corollary}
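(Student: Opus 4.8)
The plan is to read the three spaces through the exponential law and then realise the target $c_0$-sum as a single complemented subspace by one appeal to Corollary \ref{VersionGaHa}, concentrating all the difficulty in a $JN_{\mathfrak c}$ statement for $\ell_\infty$. For the isometries I would use $C(K\times L)\cong C(K,C(L))$ for Hausdorff compacta, together with $C(\beta\N)\cong\ell_\infty$ and $C(\beta\N^*)\cong\ell_\infty/c_0$: presenting $C(\beta\N^*\times\beta\N)$ once as $C(\beta\N^*,C(\beta\N))$ and once, by symmetry of the product, as $C(\beta\N,C(\beta\N^*))$ yields both identifications simultaneously.

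To exhibit the complemented copy I would apply Corollary \ref{VersionGaHa} with $K=\beta\N^*$, $X=\ell_\infty$ and $\kappa=\mathfrak c$. Fixing an almost disjoint family $(A_\alpha)_{\alpha<\mathfrak c}$ of infinite subsets of $\N$, the clopen sets $K_\alpha=A_\alpha^*\subseteq\beta\N^*$ are pairwise disjoint, nonempty and compact, and each is homeomorphic to $\beta\N^*$ since $\mathcal P(A_\alpha)/\mathrm{fin}\cong\mathcal P(\N)/\mathrm{fin}$, whence $C(K_\alpha)\cong\ell_\infty/c_0$. Provided $\ell_\infty$ has the $JN_{\mathfrak c}$ property, the corollary produces a complemented subspace of $C(\beta\N^*,\ell_\infty)$ isomorphic to $\big(\bigoplus_{\alpha<\mathfrak c}C(K_\alpha)\big)_{c_0}\cong c_0(\mathfrak c,\ell_\infty/c_0)$, which is the claim.

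Everything thus reduces to the $JN_{\mathfrak c}$ property of $\ell_\infty$, which is the heart of the matter. Since the quotient map $q:\ell_\infty\to\ell_\infty/c_0$ is a metric surjection, any $JN_{\mathfrak c}$ family for $\ell_\infty/c_0$ pulls back along $q$ to one for $\ell_\infty$, so it suffices to build a $JN_{\mathfrak c}$ family on $C(\beta\N^*)$. I would fix an independent family $(D_\alpha)_{\alpha<\mathfrak c}$ in $\mathcal P(\N)$, put the fair-coin product measure on the free subalgebra of $\mathcal P(\N)/\mathrm{fin}$ generated by the classes $[D_\alpha]$ (so that the $[D_\alpha]$ are independent of measure $1/2$), and extend it by the Horn--Tarski theorem to a finitely additive probability measure $\lambda$ on $\mathcal P(\N)/\mathrm{fin}$, that is, a Radon probability measure on $\beta\N^*$. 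Setting $\psi_\alpha=(2\cdot\mathbf 1_{D_\alpha^*}-1)\,\lambda$ gives $\psi_\alpha\in S_{C(\beta\N^*)^*}$ with $\psi_\alpha(g)=2\,\mathrm{Cov}_\lambda(g,\mathbf 1_{D_\alpha^*})$ for every $g$; by independence this covariance vanishes for all but the finitely many $\alpha$ on which a coordinate approximant $g_m$ of $g$ depends, and the uniform estimate $|\mathrm{Cov}_\lambda(g-g_m,\mathbf 1_{D_\alpha^*})|\le\|g-g_m\|_\infty$ upgrades this to $(\psi_\alpha(g))_{\alpha<\mathfrak c}\in c_0(\mathfrak c)$ for all $g\in C(\beta\N^*)$.

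I expect this last construction to be the main obstacle. The crucial point is that the required $c_0(\mathfrak c)$-decay cannot come from localised functionals: differences of point masses, or norm-one measures supported on the pairwise disjoint clopen sets of an almost disjoint family, always admit a single clopen set $B^*$ registered by infinitely many of them, violating the defining condition. Only a genuinely diffuse, independent measure forces every covariance $\mathrm{Cov}_\lambda(g,\mathbf 1_{D_\alpha^*})$ to die off, and it is the interplay between the independent family and the Horn--Tarski extension that makes the scheme work. By contrast, the two isometries and the single appeal to Corollary \ref{VersionGaHa} are routine.
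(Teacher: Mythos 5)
Your reduction and the way you produce the complemented copy are exactly the paper's route: the paper also fixes an almost disjoint family $\{A_\alpha:\alpha<\mathfrak{c}\}$, uses the pairwise disjoint clopen sets $V_\alpha=\overline{A_\alpha}^{\beta\N}\setminus\N\approx\beta\N^*$, and makes a single appeal to Corollary \ref{VersionGaHa} with $X=\ell_\infty$, $\kappa=\mathfrak{c}$. The divergence is in the step you yourself identify as the heart of the matter, the $JN_{\mathfrak c}$ property of $\ell_\infty$: the paper gets it in one line from the known fact that $\ell_2(\mathfrak{c})$ is a quotient of $C(\beta\N)$ (\cite[Theorem 4.22]{Biorthog}), since normalizing the compositions of the coordinate functionals of $\ell_2(\mathfrak c)$ with the quotient map gives a $JN_{\mathfrak c}$ family outright. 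Your alternative — pulling a family back along the metric surjection $\ell_\infty\to\ell_\infty/c_0$ (this part is correct) and then building one on $C(\beta\N^*)$ from an independent family and a Horn--Tarski extension — is in spirit a re-proof of that quotient theorem, but as written it has a genuine gap.

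The gap is the ``coordinate approximant'' step. You need, for an \emph{arbitrary} $g\in C(\beta\N^*)$ and every $\varepsilon>0$, a function $g_m$ depending on finitely many of the sets $D_\alpha^*$ with $\|g-g_m\|_\infty<\varepsilon$. Such approximants exist only for $g$ in the closed subalgebra generated by $\{\mathbf{1}_{D_\alpha^*}:\alpha<\mathfrak c\}$, which is isometric to $C(\{0,1\}^{\mathfrak c})$ and is a \emph{proper} subalgebra of $C(\beta\N^*)$: for instance, $\{0,1\}^{\mathfrak c}$ is ccc, whereas $\beta\N^*$ contains $\mathfrak c$ pairwise disjoint nonempty open sets (your own $K_\alpha$'s witness this), so the dual surjection $\beta\N^*\to\{0,1\}^{\mathfrak c}$ is not a homeomorphism. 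For $g$ outside this subalgebra your argument yields no control on $(\psi_\alpha(g))_{\alpha<\mathfrak c}$. The construction itself is salvageable without any new ingredient: under any extension $\lambda$, the functions $r_\alpha=2\cdot\mathbf{1}_{D_\alpha^*}-1$ are orthonormal in $L_2(\lambda)$, because independence gives $\int r_\alpha r_\beta\,d\lambda=\int r_\alpha\,d\lambda\int r_\beta\,d\lambda=0$ for $\alpha\neq\beta$ while $r_\alpha^2\equiv 1$; Bessel's inequality then gives $\sum_{\alpha<\mathfrak c}|\psi_\alpha(g)|^2=\sum_{\alpha<\mathfrak c}|\langle g,r_\alpha\rangle_{L_2(\lambda)}|^2\le\|g\|_{L_2(\lambda)}^2\le\|g\|_\infty^2$, so $(\psi_\alpha(g))_{\alpha<\mathfrak c}\in\ell_2(\mathfrak c)\subseteq c_0(\mathfrak c)$ for \emph{every} $g\in C(\beta\N^*)$, with no approximation needed. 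With that replacement your proof is complete, and it is essentially Rosenthal's argument for the quotient theorem that the paper simply cites.
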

\begin{proof}
Let $\{A_\alpha:\alpha<\mathfrak{c}\}$ be an almost disjoint family of subsets of $\N$. That is, for each $\alpha, \beta<\mathfrak{c}$, $|A_{\alpha}|=|A_{\beta}|=\omega$ and $|A_{\alpha}\cap A_{\beta}|<\omega$. The collection $\{V_\alpha:\alpha<\mathfrak{c}\}$, where $V_\alpha=\overline{A_\alpha}^{\beta\N}\setminus \N$ for each $\alpha$, constitutes a collection of pairwise disjoint clopen sets of $\beta\N^*$, each of them homeomorphic to $\beta \N^*$. In addition, from \cite[Theorem 4.22]{Biorthog}, $\ell_2(\mathfrak{c})$ is a quotient of $C(\beta \varGamma)$, consequently, $\ell_{\infty}\cong C(\beta\N)$ has $JN_{\mathfrak{c}}$ property. From Corollary \ref{VersionGaHa} we obtain $c_0(\mathfrak{c},\ell_{\infty}/c_0)\cong \left(\bigoplus_{\alpha<\mathfrak{c}}C(V_\alpha)\right)_{c_0}\complemented C(\beta\N^*,\ell_{\infty}).$
\end{proof}

Another application of the previous results leads to the following interesting corollary:

\begin{corollary}\label{betaKcomplemented}
For an infinite family $(K_\alpha)_{\alpha<\kappa}$ of locally compact Hausdorff spaces, let $K=\amalg_{\alpha<\kappa}K_\alpha$ be its disjoint union endowed with topology $\{ U\subset K: U\cap K_\alpha\text{ is open in }K_\alpha\text{ for all }\alpha<\kappa\}$ and let $X$ be a Banach space with the $JN_\kappa$ property. 
Then $C(\beta K,X)$ has a complemented subspace isomorphic to $\left(\bigoplus_{\alpha<\kappa} C(\beta K_\alpha)\right)_{c_0}$. 
\end{corollary}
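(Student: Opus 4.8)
The plan is to reduce the statement to a single application of Corollary \ref{VersionGaHa}. The idea is to realize each $\beta K_\alpha$ as a clopen subset of $\beta K$ in such a way that these subsets are pairwise disjoint; then $\beta K$ together with this clopen family and the space $X$ (which has $JN_\kappa$) will satisfy the hypotheses of that corollary. First I would record two elementary preliminaries. Since $K$ is a disjoint union of locally compact Hausdorff spaces it is itself locally compact Hausdorff, hence Tychonov, so $\beta K$ is well defined. Moreover each $K_\alpha$ is clopen in $K$: its intersection with $K_\alpha$ is $K_\alpha$ and with every other $K_\beta$ is empty, so both $K_\alpha$ and its complement are open. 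We may clearly assume each $K_\alpha\neq\emptyset$.

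The key step is to set $V_\alpha:=\overline{K_\alpha}^{\beta K}$ and verify three facts: (i) each $V_\alpha$ is clopen in $\beta K$; (ii) the $V_\alpha$ are pairwise disjoint; (iii) $V_\alpha\approx\beta K_\alpha$. For (i), since $K_\alpha$ is clopen the function $\chi_{K_\alpha}$ lies in $C_b(K)$ and so extends continuously to $\beta K$; by density of $K$ this extension is $\{0,1\}$-valued, hence the characteristic function of a clopen set, which is precisely $V_\alpha$. For (ii), for $\alpha\neq\beta$ the product $\chi_{K_\alpha}\cdot\chi_{K_\beta}$ vanishes on $K$, so its unique continuous extension $\chi_{V_\alpha}\cdot\chi_{V_\beta}$ vanishes on $\beta K$, giving $V_\alpha\cap V_\beta=\emptyset$. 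For (iii), I would check that $V_\alpha$ is a compactification of $K_\alpha$ enjoying the universal property of $\beta K_\alpha$: every $f\in C_b(K_\alpha)$ extends by $0$ to an element of $C_b(K)$ (again because $K_\alpha$ is clopen), which extends to $\beta K$ and restricts to a continuous extension of $f$ on $V_\alpha$; by uniqueness of the Stone--\v{C}ech compactification this yields $V_\alpha\approx\beta K_\alpha$, whence $C(V_\alpha)\cong C(\beta K_\alpha)$.

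With the $V_\alpha$ in hand I would conclude by applying Corollary \ref{VersionGaHa} to the compactum $\beta K$, the family $(V_\alpha)_{\alpha<\kappa}$ of pairwise disjoint non-empty clopen (hence compact) subsets, and the space $X$ with the $JN_\kappa$ property. This produces a complemented subspace of $C(\beta K,X)$ isomorphic to $\left(\bigoplus_{\alpha<\kappa}C(V_\alpha)\right)_{c_0}$, and replacing each $C(V_\alpha)$ by the isometric copy $C(\beta K_\alpha)$ delivers the claim. The only substantive work lies in the topological step, and I expect the main obstacle to be a clean verification of (iii): that the closure of the clopen piece $K_\alpha$ inside $\beta K$ is genuinely homeomorphic to $\beta K_\alpha$ rather than to some larger compactification. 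This is exactly where clopenness is essential, as it is what guarantees the extension-by-zero of bounded continuous functions required for the universal property.
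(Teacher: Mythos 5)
Your proposal is correct and follows essentially the same route as the paper: identify the closures $\overline{K_\alpha}^{\beta K}$ as pairwise disjoint clopen copies of $\beta K_\alpha$ and then apply Corollary \ref{VersionGaHa} with the $JN_\kappa$ property of $X$. The only difference is that you verify the topological facts (clopenness, disjointness, and $\overline{K_\alpha}^{\beta K}\approx\beta K_\alpha$) by hand, whereas the paper simply cites \cite[Theorem 6.5]{GillJer} for them.
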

\begin{proof}
It is clear that $K$ is a locally compact Hausdorff space whence we may consider its Stone-\v Cech compactification $\beta K$. Since each $K_\alpha$ is a clopen subset of $K$ we have $\overline{K_{\alpha}}^{\beta\varGamma}\approx \beta K_\alpha$ for each $\alpha<\kappa$, moreover,  $\{\overline{K_{\alpha}}^{\beta K}:\alpha<\kappa\}$ constitutes a collection of pairwise disjoint clopen sets of $\beta K$, see \cite[Theorem 6.5]{GillJer}. Then, since $X$ has the $JN_\kappa$ property, Corollary \ref{VersionGaHa} gives that $\left(\bigoplus_{\alpha<\kappa}C(\beta K_{\alpha})\right)_{c_0}\complemented C(\beta K, X)$.
\end{proof}

\begin{remark} Let $\varGamma$ be an infinite set with $|\varGamma|=\kappa$. Because $\ell_2(2^\kappa)$ is a quotient of $C(\beta \varGamma)$, see  \cite[Theorem 4.22]{Biorthog}, $C(\beta\varGamma)$ admits a $JN_\kappa$ family. Applying Corollary \ref{betaKcomplemented} with $K=\varGamma$, $(K_\alpha)_{\alpha<\kappa}$ a partition of $\varGamma$ into pairwise disjoint subsets of cardinality $\kappa$ and $X=C(\beta\varGamma)$ we deduce that $C(\beta \varGamma,C(\beta\varGamma))\cong C(\beta \varGamma\times \beta\varGamma)$ has a complemented subspace isomorphic to $\left(\bigoplus_{\alpha<\kappa}C(\beta \varGamma_\alpha)\right)_{c_0}\cong c_0(\varGamma,C(\beta\varGamma))$.
\end{remark}

\begin{remark}
It is important to observe that the result of the previous remark, for the case $\varGamma=\N$, even though were not explicitly stated in \cite{AlspachGalego}, can be derived from \cite[Proposition 5.5]{AlspachGalego}.
\end{remark}

\section{Complementations in $\ell_{\infty}(C(K))$}
\label{Sec-ellinfty(X)}

For a topological space $K$ and a Banach space $X$, $C_b(K,X)$ denotes the Banach space of all bounded continuous functions $f:K\to X$, equipped with the norm 
$\|f\|=\sup_{t\in K}\|f (t)\|$. For simplicity, $C_b(K,\R)$ will be denoted by $C_b(K)$. It is evident that $C_b(\N,X)=\ell_{\infty}(X)$ and $C_b(\N,\R)=\ell_{\infty}$.

The next result plays a central role in our investigation. The idea comes from a theorem of Khmyleva, see \cite[Theorem 1]{Khmyleva}.

\begin{theorem}\label{mainTheorem2}
Let $K$ be a topological space and $Y$ be a Banach space. Assume that there is a sequence $(K_n)_n$ of subsets of $K$ and a sequence of Banach spaces $(X_n)_n$ satisfying the following conditions:
\begin{itemize}
    \item[(1)]$\bigcup_{n\in \N} K_n=K$.
    \item[(2)]$K_n\subset \mathrm{int}(K_{n+1})$ for each $n \in \N$.
    \item[(3)]$X_n\not\hookrightarrow C_b(K_n,Y)$ for each $n \in \N$.
\end{itemize}
Then, $C_b(K,Y)$ has no complemented subspace isomorphic to $\left(\bigoplus_{n\in \N}X_n\right)_{c_0}$.
\end{theorem}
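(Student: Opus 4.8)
The plan is to argue by contradiction. Suppose $\left(\bigoplus_{n}X_n\right)_{c_0}=:Z$ is isomorphic to a complemented subspace of $C_b(K,Y)$. By Proposition \ref{fact1} there are bounded operators $T:Z\to C_b(K,Y)$ and $S:C_b(K,Y)\to Z$ with $S\circ T=\Id_Z$ and, say, $\|T\|,\|S\|\le M$. Denote by $\iota_n:X_n\to Z$ and $\pi_n:Z\to X_n$ the canonical inclusions and coordinate projections, and by $R_n:C_b(K,Y)\to C_b(K_n,Y)$ the restriction $f\mapsto f\restriction_{K_n}$, which has norm $\le 1$. The first step is a structural observation: since the $K_n$ increase and $\bigcup_n K_n=K$ by (1), every $t\in K$ lies in some $K_n$, whence $\|f\|=\sup_n\|R_n f\|$ for all $f$; thus $f\mapsto(R_nf)_n$ embeds $C_b(K,Y)$ isometrically into $\left(\bigoplus_n C_b(K_n,Y)\right)_{\ell_\infty}$. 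Condition (3) says precisely that none of the diagonal operators $R_nT\iota_n:X_n\to C_b(K_n,Y)$ can be an isomorphic embedding, and the whole proof reduces to deriving a contradiction from this.

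Assuming then that each $R_nT\iota_n$ fails to be bounded below, I would fix a summable sequence $(\epsilon_n)$ of positive reals and choose, for every $n$, a vector $x_n\in X_n$ with $\|x_n\|=1$ and $\|(T\iota_nx_n)\restriction_{K_n}\|<\epsilon_n$. Put $g_n:=T\iota_nx_n\in C_b(K,Y)$. Two features of $(g_n)$ drive everything. First, because $\|\sum_{n\in F}\theta_n\iota_nx_n\|_Z=\max_{n\in F}|\theta_n|$ in a $c_0$-sum, one has $\|\sum_{n\in F}\theta_ng_n\|\le M$ for every finite $F$ and all scalars $|\theta_n|\le1$; in particular the partial sums of $\sum_ng_n$ are uniformly bounded by $M$. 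Second, $g_n\to0$ pointwise on $K$: for fixed $t$, once $n$ is large enough that $t\in K_n$ one has $\|g_n(t)\|\le\|(T\iota_n x_n)\restriction_{K_n}\|<\epsilon_n\to0$.

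Next I would assemble these into a single function $h:=\sum_{n}g_n$, defined pointwise. The pointwise series converges because, splitting at the least level $m(t)$ with $t\in K_{m(t)}$, the tail $\sum_{n\ge m(t)}g_n(t)$ is dominated by $\sum_n\epsilon_n$. On each $K_N$ the tail $\sum_{n\ge N}g_n$ even converges \emph{uniformly} (it is dominated there by $\sum_{n\ge N}\epsilon_n$), so $h\restriction_{K_N}$ is the sum of finitely many continuous functions and a uniform limit of continuous functions, hence continuous; since by (2) the open sets $\operatorname{int}(K_{N+1})\supseteq K_N$ cover $K$, this upgrades to continuity of $h$ on all of $K$, so $h\in C_b(K,Y)$ with $\|h\|\le M$. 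This is the one place where hypothesis (2) is used. Writing $h=\sum_{n\le N}g_n+r_N$ with $r_N:=\sum_{n>N}g_n$, and using $S\circ T=\Id_Z$ together with the exact identity $\pi_mSg_n=\pi_m\iota_nx_n=\delta_{mn}x_n$, we obtain for every $m\le N$ that $\pi_m(Sh)=x_m+\pi_m(Sr_N)$. If one can show $\pi_m(Sr_N)\to0$ as $N\to\infty$ for each fixed $m$, then $\pi_m(Sh)=x_m$ for all $m$, so $Sh\in Z$ would have coordinate norms identically equal to $1$, contradicting that elements of a $c_0$-sum have coordinates tending to $0$.

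The main obstacle is exactly the control of $\pi_m(Sr_N)$. The tails satisfy $\|r_N\restriction_{K_N}\|\le\sum_{n>N}\epsilon_n\to0$ and $r_N\to0$ pointwise with $\sup_N\|r_N\|\le2M$, so in the isometric $\ell_\infty$-sum picture $r_N$ is negligible in its first $N$ coordinates and lives essentially beyond level $N$; morally $S$ should then act on it only through high coordinates and return something small in the fixed coordinate $m$. For a general bounded $S$ this is false, since a pointwise-null, uniformly bounded sequence can be sent to a norm-one sequence (for instance by an ultrafilter limit), so the claim cannot hold without genuinely exploiting both that the range is a $c_0$-sum and that (3) grants room to choose the $x_n$. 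I therefore expect the real work to be a gliding-hump refinement: passing to a subsequence and perturbing, so that the $g_n$ become almost disjointly supported along the exhaustion and $S$ acts asymptotically coordinatewise, forcing $\pi_m(Sr_N)\to0$. This disjointification, which is the technical core of Khmyleva's method, is the delicate part; crucially it uses the freedom to select each $x_n$ within an infinite-dimensional sphere, and the fact that the scalar case $X_n\cong\R$ is vacuous (then (3) cannot hold) reflects that the infinite-dimensionality of the $X_n$ is essential to the argument.
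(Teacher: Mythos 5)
Your construction matches the paper's own first half exactly: the choice of norm-one $x_n\in X_n$ with $\|(T\iota_n x_n)\restriction_{K_n}\|$ small via hypothesis (3), the functions $g_n=T\iota_n x_n$, the uniform bound on partial sums coming from the $c_0$-sum structure, and the use of hypothesis (2) to upgrade continuity on each $K_N$ to continuity on $K$. But the proof stops exactly where the contradiction should begin, and you say so yourself. The claim $\pi_m(Sr_N)\to 0$ is not merely unproved; as formulated it cannot be salvaged by a limit argument at all, because $\pi_m(Sr_N)=\pi_m(Sh)-x_m$ for \emph{every} $N\ge m$ (using $Sg_n=\iota_n x_n$), so the quantity is constant in $N$ and ``letting $N\to\infty$'' gains nothing. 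Moreover $r_N$ need not lie in $T[Z]$ (the series $\sum_{n>N}\iota_n x_n$ diverges in the $c_0$-sum), so the only equation you have, $S\circ T=\Id_Z$, gives no hold on $Sr_N$ whatsoever --- precisely the ultrafilter-type obstruction you describe. The proposed repair (a gliding-hump making ``$S$ act asymptotically coordinatewise'') is left entirely unexecuted, and it is not the mechanism that makes the theorem true. So this is a genuine gap: everything you wrote is preparation, and the theorem is proved in the part you did not write.

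What the paper does at that point is to exploit not the single function $h$ but the whole operator $Q:\ell_\infty\to C_b(K,Y)$, $Q(a)=\sum_n a_n g_n$ (your $h$ is $Q(\mathbf{1})$). Since $\|Q(e_n)\|=\|T\iota_n x_n\|\ge C>0$, Rosenthal's theorem on operators defined on $\ell_\infty$ (cited as \cite[Theorem 7.10]{Biorthog}) produces an infinite subsequence $(n_r)_r$ on which $Q$ is an isomorphic embedding, so its image $Z$ is a copy of $\ell_\infty$ inside $C_b(K,Y)$. Taking $Z_0=\overline{\Span}\{\iota_{n_r}x_{n_r}\}\sim c_0$ and the projection $P$ of the $c_0$-sum onto $Z_0$ built from functionals $\varphi_{n_r}\in 2B_{X_{n_r}^*}$ with $\varphi_{n_r}(x_{n_r})=1$, one checks $(P\circ S\restriction_Z)\circ T\restriction_{Z_0}=\Id_{Z_0}$, i.e.\ $c_0$ would be complemented in $\ell_\infty$, contradicting Phillips' theorem. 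Alternatively, your own setup can be closed directly by Phillips' lemma instead of Rosenthal's theorem: put $\psi_n=\varphi_n\circ\pi_n\circ S\circ Q\in\ell_\infty^*$. Then $\psi_n(e_k)=\delta_{nk}$, while for every $A\subseteq\N$ one has $\psi_n(\chi_A)=\varphi_n\bigl(\pi_n(SQ(\chi_A))\bigr)\to 0$ as $n\to\infty$, because $SQ(\chi_A)$ is a fixed element of the $c_0$-sum; Phillips' lemma then forces $\psi_n(e_n)\to 0$, a contradiction. Either finish uses in an essential way the values of $Q$ on all characteristic functions $\chi_A\in\ell_\infty$, not just on $\mathbf{1}$ --- that is the idea missing from your proposal.
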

\begin{proof}
Let $E=\left(\bigoplus_{n\in \N}X_n\right)_{c_0}$. Towards a contradiction, assume that there are linear operators $T:E\to C_b(K,X)$ and 
$S:C_b(K,X)\to E$ such that $S\circ T=\Id_E$ where $\Id_E$ is the identity map on $E$. From Proposition \ref{fact1} 
we know that $T$ is a linear embedding onto a complemented subspace of $C_b(K,Y)$, isomorphic to $E$. Let 
$C>0$ such that $\|T(u)\|\geq C\|u\|$ for each $u\in E$.

For each $n \in \N$, let $I_n:X_n\to E$ be the canonical embedding of $X_n$ into $E$ and let $R_n:C_b(K,Y)\to C_b(K_n,Y)$ the restriction map $f\mapsto f\restriction_{K_n}$.

From condition (3), the map $R_n\circ T \circ I_n:X_n\to C_b(K_n,Y)$ is not a linear embedding. Then, there exists 
$x_n \in X_n$ such that $\|x_n\|=1$ and 
\[\sup_{t\in K_n}\|T(I_n(x_n))(t)\|<2^{-n}.\]

For each $a=(a_n)_n \in \ell_\infty$, let $f_a:K\to Y$ be defined by
\[f_a(t)=\sum_{n=1}^{\infty} a_n T(I_n(x_n))(t).\]

Given $t\in K$, from condition (1) we have $A_t=\{n\in \N: t\in K_n\}\neq \emptyset$, and we may fix $m_t=\min (A_t)$.
Then, for every $m\geq m_t$, we have the following relation
\begin{align*}
\label{Mtest}\|f_a(t)\|&\leq \|\sum_{n< m}a_nT(I_n(x_n))(t)\|+\sum_{n\geq m}|a_n|2^{-n}\\
                     &\leq \|T(\sum_{n< m}a_nI_n(x_n))\|+\frac{\|a\|_\infty}{2^{m-1}}\leq (\|T\|+\frac{1}{2^{m-1}})\|a\|_{\infty}.
\end{align*}

And we deduce that $f_a$ is a well defined bounded map satisfying \[\sup_{t\in K}\|f_a(t)\|\leq \|T\|\|a\|_{\infty}.\] 
Furthermore, from 
Weierstrass's M-test it is readily seen that $f_a$ is continuous in each $K_n$. Then, by condition (2), $f_a$ is continuous in all $K$. 

It follows that the linear map $Q:\ell_\infty\to C_b(K,Y)$, $Q(a)=f_a$, is bounded $\|Q\|\leq \|T\|$. Moreover, if $(e_n)_n$ denotes the canonical unit sequence in $\ell_\infty$, then for each for each $n\in\N$ we have $\|Q(e_n)\|=\|T(I_n(x_n))\|\geq C>0$. Then, according to 
\cite[Theorem 7.10]{Biorthog}, there is a subsequence $(x_{n_r})_r$ such that $\ell_{\infty}\ni (a_r)_r\mapsto \sum_{r=1}^\infty a_rT(x_{n_r})\in C_b(K,X)$ is a linear embedding, an isomorphism onto its image $Z=\{\sum_{r=1}^\infty a_rT(x_{n_r}):(a_r)_r\in \ell_\infty\}$.

Let $Z_0=\{\sum_{r=1}^\infty b_rI_{n_r}(x_{n_r}): b=(b_r)_r\in c_0\}$. It is evident that $Z_0$ is a subspace of $E$, isomorphic to $c_0$.
For each $r\in \N$, let $\varphi_{n_r}\in 2B_{X_{n_r}^*}$, such that $\varphi_{n_r}(x_{n_r})=1$.

Let $P:E\to E$ be the operator defined by 
\[P((z_n)_n)=\sum_{r\in \N}\varphi_{n_r}(z_{n_r})I_{n_r}(x_{n_r}).\]  

It is evident that $P$ is a projection of $E$ onto $Z_0$. Recalling the map $S:C_b(K,Y)\to E$ from the beginning, we have
$(P \circ (S\restriction_{Z}))\circ T\restriction_{Z_0}=\Id_{Z_0}$. From Proposition \ref{fact1} we deduce that $Z\sim\ell_\infty$ has a complemented subspace isomorphic to $Z_0\sim c_0$, a  contradiction.

\end{proof}

\begin{corollary}\label{cor1}
Let $X$ and $Y$ be Banach spaces such that $c_0(X)\not\hookrightarrow Y^n$ for every $n \in \N$. Then $\ell_{\infty}(Y)$ has no complemented subspace isomorphic to 
$c_0(X)$. 
\end{corollary}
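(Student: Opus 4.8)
The plan is to apply Theorem \ref{mainTheorem2} directly to the topological space $K=\N$ equipped with the discrete topology, for which $C_b(\N,Y)=\ell_\infty(Y)$. The whole point is to choose the exhausting sequence $(K_n)_n$ and the summands $(X_n)_n$ so that condition (3) of that theorem reproduces the given hypothesis verbatim, while the resulting $c_0$-sum is isomorphic to $c_0(X)$.

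First I would set $K_n=\{1,\dots,n\}$. Since $\N$ is discrete every subset is clopen, so $\operatorname{int}(K_{n+1})=K_{n+1}$, and conditions (1) and (2) of Theorem \ref{mainTheorem2} are immediate: $\bigcup_{n\in\N} K_n=\N$ and $K_n\subset K_{n+1}$. Moreover, because $K_n$ is a finite discrete space, $C_b(K_n,Y)$ is simply the space of all $Y$-valued functions on $n$ points with the sup norm, that is, $C_b(K_n,Y)=Y^n$.

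The one genuine decision is the choice of the spaces $X_n$. The naive attempt $X_n=X$ fails, since it would require $X\not\hookrightarrow Y^n$, a condition strictly stronger than the one we are given. Instead I would take $X_n=c_0(X)$ for every $n$. Then condition (3) reads $c_0(X)\not\hookrightarrow Y^n$, which is exactly our hypothesis, while the relevant $c_0$-sum collapses: $\left(\bigoplus_{n\in\N}c_0(X)\right)_{c_0}\cong c_0(X)$, using the canonical identification $\N\times\N\approx\N$ (an element of the left-hand side corresponds to a family $(x_{n,m})_{n,m}$ for which $\{(n,m):\|x_{n,m}\|\ge\epsilon\}$ is finite for every $\epsilon>0$). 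With these choices all three hypotheses of Theorem \ref{mainTheorem2} hold, and it yields that $\ell_\infty(Y)=C_b(\N,Y)$ has no complemented subspace isomorphic to $\left(\bigoplus_{n\in\N}c_0(X)\right)_{c_0}\cong c_0(X)$, which is the desired assertion.

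I do not anticipate a serious obstacle beyond recognizing that one should feed $c_0(X)$, rather than $X$, into the summands; once that regrouping is in place the argument is a direct verification. The only points needing a line of justification are the identity $C_b(K_n,Y)=Y^n$ for the finite discrete set $K_n$ and the isomorphism $c_0(c_0(X))\cong c_0(X)$, both of which are routine.
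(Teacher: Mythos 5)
Your proposal is correct and takes essentially the same route as the paper: the paper's proof also applies Theorem \ref{mainTheorem2} (cited there, apparently by a typo, as Theorem \ref{mainTheorem1}) with $K=\N$, $K_n=\{1,2,\dots,n\}$ and $X_n=c_0(X)$, and concludes via the identification $c_0(c_0(X))\sim c_0(X)$. Your observation that one must feed $c_0(X)$ rather than $X$ into the summands is precisely the point of the paper's choice as well.
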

\begin{proof}
By applying Theorem \ref{mainTheorem1} with $K=\N$ and, for each $n\in \N$, $K_n=\{1,2,3,\ldots,n\}$ and $X_n=c_0(X)$, we deduce that
$\ell_{\infty}(Y)$ has no complemented subspace isomorphic to $c_0(c_0(X))\sim c_0(X)$. 
\end{proof}

\begin{remark}
The previous corollary should be compared with the well known fact that, for every Banach space 
$X$, $(\ell_{\infty}(X))^*$ has a complemented subspace isomorphic to $\ell_1(X^*)\cong (c_0(X))^*$. 
Indeed, let $T:\ell_1(X^*)\to (\ell_{\infty}(X))^*$ be the linear operator given by the formula 
$T((\varphi_n)_n)((x_n)_n)=\sum_{n=1}^{\infty} \varphi_n(x_n)$ and
$S:(\ell_{\infty}(X))^*\to \ell_1(X^*)$, $S(\Phi)=(\Phi\circ I_n)_n$, where $I_n:X\to \ell_{\infty}(X)$
denotes the canonical embedding of $X$ into the $n$-coordinate of $c_0(X)$.

It is evident that $S\circ T=\Id_{\ell_1(X^*)}$ and consequently, by Proposition \ref{fact1}, 
$(\ell_{\infty}(X))^*$ has a complemented copy of $\ell_1(X^*)$.

On the other hand, it is not true in general that $\ell_{\infty}(X)$ has a complemented subspace isomorphic to $c_0(X)$. If $X$ is finite dimensional, then  $c_0\sim c_0(X)\not\complemented \ell_{\infty}(X)\sim \ell_\infty$. With the help of Corollary \ref{cor1} we may find many such examples of infinite dimension. For example,
if $1\leq p<\infty$ then, for each $n \in \N$, $c_0(\ell_p)\not\hookrightarrow \ell_p^n\sim \ell_p$. Therefore, $c_0(\ell_p)\not\complemented \ell_{\infty}(\ell_p)$.
For an interesting non-separable example, let $X=C([0,\omega_1])$. According to \cite[Corollary 1.8]{KKL}, $c_0(C([0,\omega_1]))\not\hookrightarrow C([0,\omega_1])^n$ for each $n \in \N$. From Corollary \ref{cor1} we may deduce that $c_0(C([0,\omega_1]))\not\complemented \ell_{\infty}(C([0,\omega_1]))$.
\end{remark}

%\begin{remark}
%If $Y=\ell_{\infty}$ and $X=c_0$, then $c_0(X)\hookrightarrow Y^n$ for each $n \in \N$, however
%$\ell_{\infty}(Y)\sim \ell_\infty$ as no complemented subspace isomorphic to $c_0(X)\sim c_0$ showing that the reverse statement is false in general.
%\end{remark}

Concerning the Problem \ref{mainproblem} we have the following result. It is related to the main results of \cite{Candido1} and \cite{candido3}.

\begin{theorem}\label{Scattered}
Let $X$ and $Y$ be Banach spaces having no subspace isomorphic to $c_0$ and let $K$ and $L$ be compact Hausdorff spaces such that 
$C(K)\sim c_0(C(K))$ and $C(L)\sim c_0(C(L))$. Then 
\[\ell_{\infty}(C(K,X))\sim \ell_{\infty}(C(L,Y))\Rightarrow |K|=|L|.\] 
Moreover, $L$ is scattered if and only if $K$ is scattered.
\end{theorem}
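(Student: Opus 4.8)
The plan is to convert the isomorphism $\ell_{\infty}(C(K,X))\sim\ell_{\infty}(C(L,Y))$ into a pair of mutual embedding statements between $C(K,X)$ and $C(L,Y)$, and then to read off both the cardinality and the scatteredness from those. First I would upgrade the self-similarity hypothesis to the vector-valued level: since $C(K)\sim c_0(C(K))$ and $C(K,X)\cong C(K)\inject X$, functoriality of the injective tensor product together with the identification $c_0(Z)\inject X\cong c_0(Z\inject X)$ yields $C(K,X)\sim c_0(C(K,X))$, and likewise for $L,Y$. Because $C(K,X)$ is complemented in $\ell_{\infty}(C(K,X))\sim\ell_{\infty}(C(L,Y))$, self-similarity gives $c_0(C(K,X))\complemented\ell_{\infty}(C(L,Y))$. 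The contrapositive of Corollary \ref{cor1} then produces $n\in\N$ with $c_0(C(K,X))\hookrightarrow C(L,Y)^n\cong C(L\times\{1,\dots,n\},Y)$ (that is, $C(Y)$ over $n$ disjoint copies of $L$); in particular $C(K,X)\hookrightarrow C(L\times\{1,\dots,n\},Y)$. By symmetry there is $m\in\N$ with $C(L,Y)\hookrightarrow C(K\times\{1,\dots,m\},X)$.

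For the scatteredness equivalence I would argue by contraposition, say $L$ scattered but $K$ not. A compactum is non-scattered exactly when it maps continuously onto $[0,1]$, so $C(\{0,1\}^{\N})\sim C[0,1]\hookrightarrow C(K)\hookrightarrow C(K,X)$, and after one more use of self-similarity we get $c_0(C(\{0,1\}^{\N}))\complemented\ell_{\infty}(C(K,X))\sim\ell_{\infty}(C(L,Y))$. The contradiction is meant to come from Corollary \ref{cor1} (equivalently Theorem \ref{mainTheorem2} with $K_n=\{1,\dots,n\}\subset\N$), whose hypothesis here reduces to the assertion that $c_0(C(\{0,1\}^{\N}))\not\hookrightarrow C(L,Y)^n\cong C(L\times\{1,\dots,n\},Y)$ for every $n$. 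The heart of the matter, and the step I expect to be the \emph{main obstacle}, is the purely scattered lemma: if $M$ is a scattered compactum and $Y$ contains no copy of $c_0$, then $C(\{0,1\}^{\N})\not\hookrightarrow C(M,Y)$. I would attempt this by transfinite induction on the Cantor--Bendixson rank of $M$, localizing any copy of the $c_0$-structure carried by $C(\{0,1\}^{\N})$ over finitely many isolated points of $M$ and thereby forcing a copy of $c_0$ into $Y$, contrary to hypothesis.

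For the cardinality I would pass to adjoints. The embedding $C(K,X)\hookrightarrow C(L\times\{1,\dots,n\},Y)$ realizes $C(K,X)^*$ as a quotient of $C(L\times\{1,\dots,n\},Y)^*$, so $\mathrm{dens}(C(K,X)^*)\le\mathrm{dens}(C(L,Y)^*)$, and symmetrically, whence $\mathrm{dens}(C(K,X)^*)=\mathrm{dens}(C(L,Y)^*)$. It remains to identify this dual density character with $|K|$. The family $\{\delta_t\otimes\varphi_0:t\in K\}$, for a fixed $\varphi_0\in S_{X^*}$, is $2$-separated, giving $\mathrm{dens}(C(K,X)^*)\ge|K|$; for the reverse inequality I would use, in the scattered case, the identification $C(K,X)^*\cong\ell_1(K,X^*)$, and in the non-scattered case combine the scatteredness equivalence just established with a direct estimate of $\mathrm{dens}(M(K))$. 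The delicate point here — a secondary obstacle — is to control the contribution of $X^*$ so that the equality of dual densities genuinely collapses to $|K|=|L|$ rather than to an equation mixing $|K|,|L|$ with $\mathrm{dens}(X^*)$ and $\mathrm{dens}(Y^*)$; I expect the hypothesis that $X$ and $Y$ contain no copy of $c_0$, together with the self-similarity, to be exactly what is needed to separate the $K$-part from the $X$-part of the dual.
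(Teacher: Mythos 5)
Your first paragraph is correct and is essentially the paper's own first step: from $C(K,X)\complemented \ell_{\infty}(C(K,X))\sim\ell_{\infty}(C(L,Y))$ and the self-similarity hypothesis, the contrapositive of Corollary \ref{cor1} yields an $n$ with $C(K,X)\hookrightarrow C(L\times\{1,\dots,n\},Y)$, and symmetrically. (The paper phrases this with the scalar copy $c_0(C(K))\sim C(K)\complemented C(K,X)$ rather than your tensor upgrade $C(K,X)\sim c_0(C(K,X))$, and then uses $C(L)\sim c_0(C(L))$ to collapse $C(L,Y)^n$ to $C(L,Y)$; both variants are fine.) The divergence is what comes next: the paper finishes in one line by citing \cite[Theorem 1.3]{Candido1}, which states precisely that an embedding $C(K)\hookrightarrow C(L,Y)$, with $Y$ containing no copy of $c_0$, forces $|K|\le|L|$ and forces $K$ to be scattered whenever $L$ is. You instead attempt to prove both conclusions from scratch, and both of your replacement arguments have genuine gaps.

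For scatteredness, the ``purely scattered lemma'' you isolate ($C(\{0,1\}^{\N})\not\hookrightarrow C(M,Y)$ for $M$ scattered compact and $Y$ without a copy of $c_0$) is not a side step: it \emph{is} a special case of the cited theorem of \cite{Candido1}, and your one-sentence sketch by transfinite induction on Cantor--Bendixson rank, ``localizing the $c_0$-structure over finitely many isolated points,'' is not a proof of it. There is also a technical slip en route: from the mere embedding $C(\{0,1\}^{\N})\hookrightarrow C(K,X)$ you conclude $c_0(C(\{0,1\}^{\N}))\complemented\ell_{\infty}(C(K,X))$, but complemented copies do not pass through uncomplemented embeddings --- for instance $C([0,1])\hookrightarrow\ell_{\infty}$, while by Lindenstrauss's theorem no infinite-dimensional complemented subspace of $\ell_{\infty}$ is separable. (This slip is repairable, since the embedding $C(K,X)\hookrightarrow C(L\times\{1,\dots,n\},Y)$ from your first paragraph already suffices and $L\times\{1,\dots,n\}$ is scattered when $L$ is; the unproven lemma is the real gap.) For the cardinality, the dual-density route cannot work at all: in the scattered case $C(K,X)^*\cong\ell_1(K,X^*)$, so $\mathrm{dens}(C(K,X)^*)=\max(|K|,\mathrm{dens}(X^*))$, and equality of dual densities only yields $\max(|K|,\mathrm{dens}(X^*))=\max(|L|,\mathrm{dens}(Y^*))$, from which $|K|=|L|$ does not follow (both maxima can be swallowed by the dual of the fiber space). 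The hypothesis that $X$ has no subspace isomorphic to $c_0$ places no bound whatsoever on $\mathrm{dens}(X^*)$ --- take $X=\ell_1(\varGamma)$ with $|\varGamma|$ arbitrarily large --- so no amount of density counting will ``separate the $K$-part from the $X$-part''; the mechanism behind \cite[Theorem 1.3]{Candido1} is genuinely different. In short: your reduction to mutual embeddings is right and matches the paper, but the two facts you then need are exactly the content of the external theorem the paper invokes, and neither of your substitutes for it closes.
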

\begin{proof}
From the hypotheses we have 
\[c_0(C(K))\sim C(K)\complemented C(K,X)\complemented \ell_{\infty}(C(K,X))\sim \ell_{\infty}(C(L,Y)).\]
From Corollary \ref{cor1}, there is $n\in \N$ such that $C(K)\hookrightarrow C(L,Y)^n$. Recalling that  $C(L)\sim c_0(C(L))$, we have
\[C(K)\hookrightarrow C(L,Y).\]
From \cite[Theorem 1.3]{Candido1} we deduce that $|K|\leq |L|$. Furthermore, from \cite[Theorem 1.3]{Candido1}, if $L$ is scattered, then $K$ is scattered.
We are done by changing the roles of $K$ and $L$ in the foregoing argument.

\end{proof}

\begin{remark}We observe that Theorem \ref{Scattered} is false without the hypotheses $C(K)\sim c_0(C(K))$ and $C(L)\sim c_0(C(L))$.
Recalling that $\ell_{\infty}(C(\beta\N))\cong \ell_{\infty}(\ell_{\infty})\cong \ell_{\infty}$ and $c_0\sim C([0,\omega])$, and denoting by $\beta\N \dotcup [0,\omega]$ the disjoint 
union of $\beta\N$ and $[0,\omega]$, we have
\[\ell_{\infty}(C(\beta\N \dotcup [0,\omega]))\sim\ell_\infty(C(\beta\N))\oplus C([0,\omega]))\sim \ell_{\infty}(\ell_{\infty})\oplus \ell_{\infty}(C([0,\omega]))\sim \ell_{\infty}(C([0,\omega])).\]

However 
\[|[0,\omega]|=\omega<2^{2^\omega}=|\beta\N \dotcup [0,\omega]|.\] 
Furthermore, $[0,\omega]$ is scattered while $\beta\N \dotcup [0,\omega]$ is not.

\end{remark}

The following lemma relates to \cite[Theorem 3.5]{AlspachGalego}. We use a similar idea as in \cite[Lemma 2.3]{Candido1}.

\begin{lemma}
Let $K$ and $L$ be infinite metric compacta and let $X$ be a Banach space containing no subspace isomorphic to $c_0$. Then
\[C(K)\hookrightarrow C(L,X)\Rightarrow C(K)\complemented C(L)\]
\end{lemma}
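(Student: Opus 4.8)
The plan is to reduce the vector-valued embedding to the scalar classification of $C$-spaces over metric compacta, the bridge being Pe\l czy\'nski's theorem that, since $X$ contains no copy of $c_0$, every bounded operator from a $C(K)$-space into $X$ is weakly compact. First I would dispose of the case where $L$ is uncountable, which in fact needs no hypothesis at all: by Milutin's theorem $C(L)\sim C([0,1])$, and since $L\dotcup K$ is again an uncountable metric compactum we get $C(L\dotcup K)\sim C([0,1])\sim C(L)$; as $C(L\dotcup K)\cong C(L)\oplus C(K)$ this already yields $C(K)\complemented C(L)$. So I may assume $L$ is countable, and it is here that the embedding hypothesis enters: applying \cite[Theorem 1.3]{Candido1} to $C(K)\hookrightarrow C(L,X)$ shows that $K$ is scattered (as $L$ is) and $|K|\le|L|=\omega$, so that $K$ and $L$ are \emph{both} countable compacta.

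The heart of the argument is then a de-vectorization: I would show that $C(K)\hookrightarrow C(L,X)$ together with the absence of $c_0$ in $X$ forces the scalar embedding $C(K)\hookrightarrow C(L)$. The mechanism is that for each $t\in L$ the evaluation $U_t=\delta_t\circ T\colon C(K)\to X$ is weakly compact by Pe\l czy\'nski's theorem, so no part of the norm of $Tf=(U_tf)_{t\in L}$ can be carried by a $c_0$-pattern living in the $X$-fibres; the $c_0$-structure that $C(K)$ inherits must therefore already be detected after pairing the fibres against $X^*$ and landing in $C(L)$. Concretely, $L$ being metric and $C(K)$ separable, the weakly compact family $\{U_t\}_{t\in L}$ varies $\mathrm{SOT}$-continuously over the compact $L$, and I would use this to produce a continuous field of norming functionals, hence a bounded operator $C(L,X)\to C(L)$ that remains a linear embedding on $T[C(K)]$. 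This step is modelled on \cite[Lemma 2.3]{Candido1} and \cite[Theorem 3.5]{AlspachGalego}, and I expect it to be the main obstacle, the delicate point being to linearize the per-function choice of peaking functional into a single operator.

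Finally, granted the scalar embedding $C(K)\hookrightarrow C(L)$ between countable compacta, I would conclude via the Bessaga--Pe\l czy\'nski classification. Writing $C(K)\sim C([0,\omega^{\omega^{\alpha}}])$ and $C(L)\sim C([0,\omega^{\omega^{\beta}}])$, the embedding forces $\alpha\le\beta$, since the relevant isomorphic index (for instance the Szlenk index, both spaces being Asplund) is monotone under passing to subspaces. Because $\alpha\le\beta$, the additive indecomposability of $\omega^{\omega^{\beta}}$ provides a clopen splitting $[0,\omega^{\omega^{\beta}}]\approx[0,\omega^{\omega^{\alpha}}]\dotcup[0,\omega^{\omega^{\beta}}]$, whence $C(L)\cong C(K)\oplus C(L)$ and in particular $C(K)\complemented C(L)$, via Proposition \ref{fact1}. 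Combining the two cases completes the proof.
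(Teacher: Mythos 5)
The central step of your proposal --- the ``de-vectorization'' showing that $C(K)\hookrightarrow C(L,X)$ forces the scalar embedding $C(K)\hookrightarrow C(L)$ --- is a genuine gap, not merely a delicate point. What you describe is a heuristic: Pe\l czy\'nski's theorem does give that each evaluation $U_t=\delta_t\circ T\colon C(K)\to X$ is weakly compact, and $t\mapsto U_t$ is indeed SOT-continuous, but neither fact yields a ``continuous field of norming functionals.'' Any single operator of the form $F\mapsto\bigl(t\mapsto\varphi_t(F(t))\bigr)$, with a fixed choice of functionals $\varphi_t\in X^*$, has no reason to remain bounded below on $T[C(K)]$: as $f$ varies, the norm of $Tf$ may be attained at different points $t$ along essentially ``orthogonal'' directions of $X$, so any fixed field $(\varphi_t)_t$ can annihilate most of it. Weak compactness of the fiber operators is a fiberwise condition and controls nothing about this interaction between fibers. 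This is exactly the difficulty the paper does not attack head-on. Instead, it reduces to a countable metrizable quotient $L_0$ of $L$ and a separable subspace $X_0\subset X$ with $C(K)\hookrightarrow C(L_0,X_0)$, applies Pe\l czy\'nski's theorem \cite{SuperPel} in the separable space $C(L_0,X_0)$ to get $C(K)\complemented C(L_0,X_0)\sim C([0,\omega^{\omega^\alpha}],X_0)$, embeds the latter complementably into $C(\beta\N\times[0,\omega^{\omega^\alpha}],X_0)$, and then invokes the substantial Theorem 3.5 of \cite{AlspachGalego} --- whose hypothesis is precisely that $X_0$ contains no copy of $c_0$ --- to strip off the vector-valued part and conclude $C(K)\complemented C([0,\omega^{\omega^\alpha}])\sim C(L_0)\complemented C(L)$. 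Making your sketch rigorous would essentially amount to reproving (a version of) that theorem, so as written your proof is incomplete at its heart.

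The outer shell of your argument is fine, with minor remarks. Your uncountable case is correct and is a neat alternative to the paper's (which uses Banach--Mazur plus \cite{SuperPel} rather than the disjoint-union trick); in both treatments the embedding hypothesis is not needed there. The reduction to countable $K$ via \cite[Theorem 1.3]{Candido1} is legitimate, although the paper's route never needs $K$ to be countable. In your final step, once one has $C(K)\hookrightarrow C(L)$ with $C(L)$ separable, one can quote \cite{SuperPel} directly to obtain $C(K)\complemented C(L)$, bypassing the Szlenk-index argument entirely; note also that your claimed homeomorphism $[0,\omega^{\omega^{\beta}}]\approx[0,\omega^{\omega^{\alpha}}]\dotcup[0,\omega^{\omega^{\beta}}]$ fails when $\alpha=\beta$ (the disjoint union then has two points of maximal Cantor--Bendixson rank), though the Banach-space isomorphism $C(K)\oplus C(L)\sim C(L)$ you want still follows from Bessaga--Pe\l czy\'nski \cite{BP}.
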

\begin{proof}
If $L$ is uncountable, then by Miljutin theorem \cite{Mil}  \cite[Theorem 21.5.10]{Se}, $C(L)\sim C([0,1])$. From Banach-Mazur theorem \cite[Theorem 8.7.2]{Se}  we have $C(K)\hookrightarrow C(L)$, then, by \cite[Theorem 1]{SuperPel}, $C(K)\complemented C(L)$. 

On the other hand, suppose that $L$ is countable and let $Y$ be a subspace of $C(L, X)$ that is isomorphic to $C(K)$. Since $Y$ is separable and $\{f \cdot u : (f,u) \in B_{C(L)}\times B_X\}$ spans
a dense subset in $C(L, X)$, there is $\{f_n : n \in \N\} \subset B_{C(L)}$, and $\{u_n : n \in \N\}\subset B_X$ such that 
$Y\subset \overline{\mathrm{span}}(\{f_n\cdot u_m : n, m \in \N\})$. Let $L_0 = L/\sim$ be the quotient space, where $x\sim y$ if and only if $f_n(x)=f_n(y)$ 
for every $n$, and let $q : L \to L_0$ be the quotient map. It is readily seen that $L_0$ is a countable metrizable compactum, and the formula $g\mapsto g\circ q$ 
defines an isometric linear embedding of $C(L_0)$ into $C(L)$. From \cite[Theorem 1]{SuperPel} we have $C(L_0)\complemented C(L)$.

Let $X_0 = \overline{\mathrm{span}}(\{u_n : n\in \N\})$ and for each $n \in \N$, let $g_n :L_0\to \R$ be given by $g_n(q(x)) = f_n(x), x\in L$. Clearly, 
$g\in C(L_0)$ and $\mathrm{span}(\{f_n\cdot u_m :n, m \in \N\}) \sim \mathrm{span}(\{g_n\cdot u_m : n, m\in \N\}) \subset C(L_0,X_0)$. We deduce
\[C(K)\sim Y\hookrightarrow C(L_0,X_0).\]

Since $C(L_0,X_0)$ is separable, by \cite[Theorem 1]{SuperPel}, we have
\[C(K)\complemented C(L_0,X_0)\]

Because $L_0$ is a countable metric compactum, it is a compact scattered first-countable space. By Mazurkiewicz-Siepi\'nski \cite[Theorem 8.6.10]{Se} and Bessaga-Pelczy\'nski \cite[Theorem 1]{BP}, there is $0 \leq \alpha < \omega_1$ such that $C(L_0)\sim C([0,\omega^{\omega^\alpha}])$. Hence
\[C(K)\complemented C(L_0,X_0)\sim C([0,\omega^{\omega^\alpha}],X_0)\complemented C(\beta\N,C([0,\omega^{\omega^\alpha}],X_0))\cong C(\beta\N\times [0,\omega^{\omega^\alpha}],X_0)\]

Since $X_0$ has no subspace isomorphic to $c_0$, from \cite[Theorem 3.5]{AlspachGalego} we deduce
\[C(K)\complemented C([0,\omega^{\omega^\alpha}])\sim C(L_0).\]

Recalling that $C(L_0)\complemented C(L)$ we conclude $C(K)\complemented C(L)$.

\end{proof}

From the previous lemma and Pe{\l}czy\'nski decomposition method we obtain the following extension of \cite[Theorem 3.8]{AlspachGalego}.

\begin{proposition}\label{extensionfalspachgalego}Let $X$ and $Y$ be a Banach spaces containg no subspace isomorphic to  $c_0$.
Then for any infinite metric compacta $K_1$ and $K_2$,
\[C(K_1)\hookrightarrow C(K_2,Y)\text{ and } C(K_2)\hookrightarrow C(K_1,X) \Rightarrow  C(K_1)\sim C(K_2)\].
\end{proposition}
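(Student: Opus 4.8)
The plan is to combine the previous Lemma with the Pe\l czy\'nski decomposition method. First I would apply the Lemma twice. The hypothesis $C(K_1)\hookrightarrow C(K_2,Y)$ together with the assumption that $Y$ has no subspace isomorphic to $c_0$ gives, by the Lemma, that $C(K_1)\complemented C(K_2)$. Symmetrically, $C(K_2)\hookrightarrow C(K_1,X)$ and the assumption on $X$ yield $C(K_2)\complemented C(K_1)$. So after this first step I have the mutual complementation
\[C(K_1)\complemented C(K_2)\quad\text{and}\quad C(K_2)\complemented C(K_1).\]

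The second step is to upgrade mutual complementation to isomorphism. In general $X\complemented Y$ and $Y\complemented X$ do not force $X\sim Y$ (the Schroeder--Bernstein problem for Banach spaces has a negative answer), so I cannot conclude directly; I need the extra structure available here, namely that $C(K_i)$ spaces over infinite metric compacta are isomorphic to their own $c_0$-sums. Indeed, by the Bessaga--Pelczy\'nski/Mazurkiewicz--Sierpi\'nski classification together with Miljutin's theorem, every $C(K_i)$ with $K_i$ an infinite metric compactum satisfies $C(K_i)\sim c_0(C(K_i))$; in particular it is isomorphic to its own square and to its own $c_0$-sum. This is precisely the hypothesis needed to run Pe\l czy\'nski's decomposition: writing $C(K_1)\sim C(K_2)\oplus W$ and $C(K_2)\sim C(K_1)\oplus V$ for some complemented $W,V$, and using $C(K_i)\sim c_0(C(K_i))$ to absorb the summands, one obtains $C(K_1)\sim C(K_2)$.

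I would carry out the decomposition explicitly as follows. From $C(K_2)\complemented C(K_1)$ write $C(K_1)\sim C(K_2)\oplus A$. Using $C(K_2)\sim c_0(C(K_2))$ one forms
\[c_0(C(K_1))\sim c_0(C(K_2)\oplus A)\sim c_0(C(K_2))\oplus c_0(A)\sim C(K_2)\oplus c_0(A),\]
and since $C(K_1)\sim c_0(C(K_1))$ this gives an absorption identity allowing $A$ to be swallowed, yielding $C(K_1)\complemented C(K_2)$ refined to an isomorphism once the symmetric relation $C(K_1)\complemented C(K_2)$ is fed in. The standard Pe\l czy\'nski argument then produces $C(K_1)\sim C(K_2)$.

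The main obstacle I anticipate is the upgrade from two-sided complementation to genuine isomorphism, since this is exactly where a naive Schroeder--Bernstein argument fails for Banach spaces. The decomposition method resolves it, but only because the hypothesis on infinite metric compacta secretly guarantees the self-$c_0$-sum property $C(K_i)\sim c_0(C(K_i))$; verifying that this property holds for all infinite metric $K_i$ (via the classification of separable $C(K)$ spaces) is the load-bearing structural input. Once that is in place, the rest is the routine bookkeeping of the decomposition scheme.
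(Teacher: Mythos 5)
Your proof is correct and is exactly the argument the paper intends: the paper derives this proposition by applying the preceding lemma twice (using the $c_0$-free hypotheses on $X$ and $Y$) to get mutual complementation of $C(K_1)$ and $C(K_2)$, and then invoking the Pe\l czy\'nski decomposition method, which works because $C(K)\sim c_0(C(K))$ for every infinite metric compactum $K$. Your identification of that self-$c_0$-sum property (via Miljutin and Bessaga--Pe\l czy\'nski/Mazurkiewicz--Sierpi\'nski) as the load-bearing input is precisely the point, since the paper itself uses this same fact in the theorem that follows.
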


From \cite[Corollary 5.8]{galego1} (also \cite[Corollary 3.8]{AlspachGalego}) we know that for infinite metric compacta $K_1$ and $K_2$, 
$C(K_1,\ell_{\infty})\sim C(K_2,\ell_{\infty})$ if and only if $C(K_1)\sim C(K_2)$. We will see in the next section that whenever $K$ is a compact metric space,
$C(K,\ell_{\infty})\not\sim \ell_{\infty}(C(K))$. In addition, we have the following complementary result.

\begin{theorem}Let $X$ and $Y$ be a Banach spaces containg no subspace isomorphic to  $c_0$.
Then for any infinite metric compacta $K_1$ and $K_2$,
\[\ell_{\infty}(C(K_1,X))\sim \ell_{\infty}(C(K_2,Y))\Rightarrow C(K_1)\sim C(K_2).\]
\end{theorem}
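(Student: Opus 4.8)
The plan is to deduce the desired conclusion by combining the hypothesis with Corollary \ref{cor1} to obtain cross-embeddings of the scalar $C(K_i)$ spaces, and then to feed these into Proposition \ref{extensionfalspachgalego}. First I would exploit the symmetry of the statement by arguing only one direction and then swapping the roles of $K_1$ and $K_2$. Starting from $\ell_{\infty}(C(K_1,X))\sim \ell_{\infty}(C(K_2,Y))$, I observe that $C(K_1)$ sits complementedly inside $C(K_1,X)$ (via the constant-vector embedding using any fixed $u\in S_X$ together with a functional $\varphi\in S_{X^*}$ with $\varphi(u)=1$), which in turn sits complementedly in $\ell_{\infty}(C(K_1,X))$ as its first coordinate. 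Hence
\[
c_0(C(K_1))\sim C(K_1)\complemented \ell_{\infty}(C(K_1,X))\sim \ell_{\infty}(C(K_2,Y)),
\]
where the first isomorphism is the Miljutin-type fact $C(K)\sim c_0(C(K))$ valid for infinite metric compacta (this is what lets me present $C(K_1)$ as a $c_0$-sum so that Corollary \ref{cor1} applies).

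Next I would apply Corollary \ref{cor1} with the sum $c_0(C(K_1))$ playing the role of $c_0(X)$ and $C(K_2,Y)$ playing the role of $Y$. Since the complemented embedding just produced means $c_0(C(K_1))$ \emph{is} a complemented subspace of $\ell_{\infty}(C(K_2,Y))$, the corollary forces the failure of its hypothesis: there must exist $n\in\N$ with $c_0(C(K_1))\hookrightarrow C(K_2,Y)^n$. Because $C(K_2,Y)^n\cong C(K_2, Y^n)$ and $Y^n$ still contains no subspace isomorphic to $c_0$, and because $C(K_1)\hookrightarrow c_0(C(K_1))$, I extract the clean cross-embedding
\[
C(K_1)\hookrightarrow C(K_2,Y^n).
\]
Running the symmetric argument with the roles reversed yields, for some $m\in\N$, $C(K_2)\hookrightarrow C(K_1,X^m)$, where again $X^m$ contains no copy of $c_0$.

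With both embeddings $C(K_1)\hookrightarrow C(K_2,Y^n)$ and $C(K_2)\hookrightarrow C(K_1,X^m)$ in hand, and with $Y^n$ and $X^m$ free of $c_0$-subspaces, Proposition \ref{extensionfalspachgalego} applies directly (with $X^m$ and $Y^n$ in place of $X$ and $Y$) and delivers $C(K_1)\sim C(K_2)$, completing the proof. The bookkeeping is routine: checking that passing from $Y$ to $Y^n$ preserves the ``no $c_0$-subspace'' condition (immediate, since a $c_0$-copy in a finite product projects to a $c_0$-copy in one factor), and identifying $C(K,Y)^n$ with $C(K,Y^n)$ isometrically. The main obstacle I anticipate is precisely making sure the hypotheses of Proposition \ref{extensionfalspachgalego} are met \emph{after} the power $n$ appears — one must not lose the $c_0$-free condition when replacing $Y$ by $Y^n$ — and ensuring the initial reduction correctly presents $C(K_1)$ as the requisite $c_0$-sum so that Corollary \ref{cor1} is genuinely applicable rather than only the scalar identity $C(K_1)\sim c_0(C(K_1))$; once that is clean, everything else is a direct chain of the cited results.
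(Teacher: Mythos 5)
Your proof is correct and follows essentially the same route as the paper: produce the complemented copy $c_0(C(K_1))\sim C(K_1)\complemented \ell_{\infty}(C(K_2,Y))$, invoke Corollary \ref{cor1} to get an embedding into a finite power of $C(K_2,Y)$, do the same with the roles swapped, and finish with Proposition \ref{extensionfalspachgalego}. The only (harmless) divergence is in absorbing the exponent: the paper collapses $C(K_2,Y)^n\sim C(K_2,Y)$ using the scalar isomorphism $C(K_2)^n\sim C(K_2)$ and applies the proposition with the original $X,Y$, whereas you rewrite $C(K_2,Y)^n\cong C(K_2,Y^n)$ and apply it with $X^m,Y^n$, after verifying (correctly, via the classical fact that a copy of $c_0$ in a finite direct sum forces one in some summand) that the $c_0$-free hypothesis survives taking finite powers.
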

\begin{proof}
If $\ell_{\infty}(C(K_1,X))\sim \ell_{\infty}(C(K_2,Y))$ then 
\[C(K_1)\complemented \ell_{\infty}(C(K_2,Y))\text{ and }C(K_2)\complemented \ell_{\infty}(C(K_1,X)).\]

Since $c_0(C(K_i))\sim C(K_i)^n\sim C(K_i)$ for each $i=1,2$ and for every $n \in \N$, we deduce from Corollary \ref{cor1} that
$C(K_1)\hookrightarrow C(K_2,Y)$ and $C(K_2)\hookrightarrow C(K_1,X)$. Then, by Proposition \ref{extensionfalspachgalego}, we have
$C(K_1)\sim C(K_2)$.

\end{proof}

\section{Further Applications}
\label{Sec-FurtherApp}

A natural question related to this this investigation is whether exists a Hausdorf compactum $K$ such that 
\begin{equation}\label{rel1}
C(K,\ell_{\infty})\sim \ell_{\infty}(C(K)).
\end{equation}

Since $\ell_{\infty}\cong C(\beta \N)$, we have $C(K,\ell_{\infty})\cong C(K\times \beta \N)\cong C(\beta\N,C(K))$, and the previous question could be put into words in the following way: 

\begin{problem}Is there a Hausdorff compactum $K$ such that the space of bounded sequences in $C(K)$ is isomorphic to the space of precompact sequences in $C(K)$?
\end{problem}

As observed by D. Alspach and E. M. Galego in \cite[p.157]{AlspachGalego}, if such infinite $K$ exists, then $C(K)$ has a complemented subspace isomorphic to $c_0$. Indeed, by \cite{Cem}, $c_0\complemented C(\beta\N,C(K))\sim \ell_{\infty}(C(K))$ and, by \cite{LeungRabi}, $c_0\complemented \ell_\infty(C(K))$ if and only if $c_0\complemented C(K)$. We deduce, by \cite[Corollary 2]{Cem}, that if $C(K)$ is a Grothendieck space, 
then $C(K,\ell_{\infty})\not\sim \ell_{\infty}(C(K))$.

By using some of the results obtained in our investigation we can add more information.

\begin{theorem}\label{isoiso}
Let $K$ be an infinite Hausdorff compactum. Then 
\[C(K,\ell_{\infty})\sim \ell_{\infty}(C(K))\Rightarrow \ell_{\infty}\hookrightarrow C(K).\]
\end{theorem}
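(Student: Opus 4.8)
The plan is to exploit the identification $C(K,\ell_\infty)\cong C(\beta\N,C(K))$ recorded just before the statement, which moves the problem onto the zero-dimensional compactum $\beta\N$, where Theorem~\ref{mainTheorem1} and its corollaries are available. First I would locate a complemented copy of $c_0(\ell_\infty)$ inside $C(\beta\N,C(K))$. Fix a partition $\N=\bigsqcup_{n\in\N}A_n$ into infinite sets and set $L_n=\overline{A_n}^{\beta\N}$; these are pairwise disjoint nonempty clopen (hence compact) subsets of $\beta\N$, each homeomorphic to $\beta\N$, so $C(L_n)\cong\ell_\infty$. Since $K$ is infinite, $C(K)$ is infinite-dimensional and therefore has the $JN_\omega$ property by the Josefson--Nissenzweig theorem. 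Applying Corollary~\ref{VersionGaHa} with ambient compactum $\beta\N$, the clopen family $(L_n)_n$, and the $JN_\omega$ space $C(K)$ then yields
\[
c_0(\ell_\infty)\cong\left(\bigoplus_{n\in\N}C(L_n)\right)_{c_0}\complemented C(\beta\N,C(K))\cong C(K,\ell_\infty).
\]

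Next I would transport this complemented subspace across the hypothesised isomorphism. From $C(K,\ell_\infty)\sim\ell_\infty(C(K))$ I obtain $c_0(\ell_\infty)\complemented\ell_\infty(C(K))$. Now I invoke Corollary~\ref{cor1} in contrapositive form with $X=\ell_\infty$ and $Y=C(K)$: since $\ell_\infty(C(K))$ \emph{does} have a complemented subspace isomorphic to $c_0(\ell_\infty)$, there must be some $n\in\N$ for which $c_0(\ell_\infty)\hookrightarrow C(K)^n$. Composing with the isometric inclusion of $\ell_\infty$ into the first coordinate of $c_0(\ell_\infty)$, this produces an isomorphic embedding $\ell_\infty\hookrightarrow C(K)^n$.

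The final, and I expect hardest, step is to descend from $\ell_\infty\hookrightarrow C(K)^n$ to $\ell_\infty\hookrightarrow C(K)$; here the vector-valued machinery no longer helps and I would argue directly. Let $J:\ell_\infty\to C(K)^n$ be the embedding and $\pi_i:C(K)^n\to C(K)$ the coordinate projections, so that $J=(\pi_1 J,\dots,\pi_n J)$. If every $\pi_i\circ J$ were weakly compact, then $J$ would be weakly compact as well (a finite product of weakly compact operators is weakly compact), forcing the non-reflexive space $\ell_\infty\cong J[\ell_\infty]$ to be reflexive, a contradiction. Hence some $\pi_{i_0}\circ J:\ell_\infty\to C(K)$ fails to be weakly compact, and by Rosenthal's theorem on operators out of $\ell_\infty$ --- a non-weakly-compact operator on $\ell_\infty$ restricts to an isomorphism on a subspace isomorphic to $\ell_\infty$ --- we conclude $\ell_\infty\hookrightarrow C(K)$. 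The main obstacle is precisely this reduction modulo the $n$ coordinates: it is the one point where one must leave the elementary transfer arguments and appeal to the structure theory of operators on $\ell_\infty$.
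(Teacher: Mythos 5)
Your proof is correct and is essentially the paper's own argument: the same identification $C(K,\ell_\infty)\cong C(\beta\N,C(K))$, the same complemented copy of $c_0(\ell_\infty)$ obtained from the Josefson--Nissenzweig theorem together with Corollary \ref{VersionGaHa} (the paper invokes Corollary \ref{betaKcomplemented}, whose proof is exactly your partition-of-$\N$ construction), and the same application of Corollary \ref{cor1} to reach $\ell_\infty\hookrightarrow C(K)^n$. The only divergence is the final step: the paper handles the passage from $C(K)^n$ to $C(K)$ by citing Drewnowski \cite{Drew}, whereas you prove it directly via the ideal property of weakly compact operators plus Rosenthal's theorem on non-weakly-compact operators out of $\ell_\infty$ --- which is precisely the standard proof of the cited fact, so your version is merely more self-contained.
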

\begin{proof}
Suppose that $C(K,\ell_{\infty})\sim \ell_{\infty}(C(K))$. Since $C(K,\ell_{\infty})\sim C(\beta\N, C(K))$ and $C(K)$ is infinite dimensional, by Corollary 3.6,  $C(\beta\N,C(K))$ has a complemented subspace isomorphic to $c_0(C(\beta\N))$. Hence 
$c_0(C(\beta\N))\complemented  \ell_{\infty}(C(K))$. According to Corollary \ref{cor1}, there is $n \in \N$ such that $\ell_{\infty}\hookrightarrow C(K)^n$.
Then, by \cite{Drew}, $\ell_{\infty}\hookrightarrow C(K)$.
\end{proof}

\begin{remark}
From Theorem \ref{isoiso} we deduce that $\ell_{\infty}(C(K))\not\sim C(K,\ell_{\infty})$ wherever $K$ is a metric compactum. 
This extends the main result of \cite{CemMend} and \cite[Corollary 3]{Khmyleva}. It would be interesting to know whether there exists an infinite 
Hausdorff compactum $K$ such that (\ref{rel1}) holds.
\end{remark}

\section{Acknowledgements}

The author was supported by Funda\c c\~ao de Amparo \`a Pesquisa do Estado de S\~ao Paulo - FAPESP No. 2016/25574-8

\bibliographystyle{amsalpha}

\begin{thebibliography}{A}


\bibitem{AlspachGalego} D. E. Alspach, E. M. Galego, 
\emph{Geometry of the Banach spaces $C( \beta \N \times K, X)$ for compact metric spaces K}. 
Studia Math. 207 (2011), 2, 153--180.

\bibitem{BP}  C. Bessaga, A. Pe\l czy\'nski,
\emph{Spaces of continuous functions IV},  
Studia Math. 19 (1960), 53--62.


\bibitem{candido2} L. Candido,
\emph{On Banach spaces of the form $C_0(\alpha\times L)$ with few operators}
Banach J. Math. Anal. 15:41 (2021), 1--23.

\bibitem{CK} L. Candido, P. Koszmider,
\emph{On complemented copies of $c_0(\omega_1)$ in $C(K^n)$ spaces},  
Studia Math. 233 (2016), 209--226.



\bibitem{Candido1} L. Candido, 
\emph{On embeddings of $C_0(K)$ spaces into $C_0(L, X)$ spaces}, 
Studia Math. 232 (2016), 1--6.


\bibitem{candido3}L. Candido, E. M. Galego,
\emph{A weak vector-valued Banach-Stone theorem}. 
Proc. Amer. Math. Soc. 141 (2013), 3529--3538. 




\bibitem{Cem} P. Cembranos, 
\emph{$C(K,E)$ contains a complemented copy of $c_0$}, 
Proc. Amer. Math. Soc. 91 (1984), 556--558. 


\bibitem{CemMend} P. Cembranos, J. Mendoza,
\emph{The Banach spaces $\ell_\infty(c_0)$ and $c_0(\ell_{\infty})$ are not isomorphic}
J. Math. Anal. Appl. 367 (2010) 461--463



\bibitem{ViniGale} V. M. Cortes, E. M. Galego,
\emph{When does $C(K,X)$ contain a complemented copy of $c_0(\varGamma)$ iff $X$ does?},
Bull. Sci. math. 159 (2020), 102839.


\bibitem{CGS} V. M. Cortes, E. M. Galego, C. Samuel, 
\emph{Complemented copies of $c_0(\tau)$ in tensor products of $L_p[0,1]$}, 
Pacific J. Math. 301 (2019), no. 1, 67--88.




\bibitem{Diestel}J. Diestel, 
\emph{Sequences and series in Banach spaces}, 
Graduate text in Mathematics 92, Springer, 1984.


\bibitem{Drew}L. Drewnowski, 
\emph{Copies of $\ell_{\infty}$ in an operator space}, 
Math. Proc. Camb. Phil. Soc. 108 (1990), 523--526.


%\bibitem{DowJunPel} A. Dow, H. Junnila, J. Pelant,
%\emph{Chain condidition and weak topologies},  
%Topology Appl. 156 (2009), 1327--1344.


%\bibitem{Fabian}
%M. Fabian, P., Habala, P. H\'ajek, V. Montesinos Santaluc\'ia, V. Zizler,
%\emph{Banach Space Theory - The Basis for Linear and Nonlinear Analysis}
%CMS Books in Mathematics, Springer-Verlag New York, 2011.



%\bibitem{Fabian2}
%M. Fabian, P., Habala, P. H\'ajek, V. Montesinos Santaluc\'ia, J. Pelant, V. Zizler,
%\emph{Functional Analysis and Infinite-Dimensional Geometry}
%CMS Books in Mathematics, 2001.



\bibitem{galego1}E. M. Galego,
\emph{Banach spaces of continuous vector-valued functions of ordinals}.
Proc. Edinburgh Math. Soc. (2001) 44, 49--62.



\bibitem{GaHa} E. M. Galego, J. N. Hagler,
\emph{Copies of $c_0(\varGamma)$ in $C(K,X)$ spaces}. 
Proc. Amer. Math. Soc. 140 (2012), 3843--3852. 


\bibitem{GillJer}L. Gillman and M. Jerison, 
\emph{Rings of Continuous Functions}, 
D. Van Nostrand, Princeton, NJ, 1960.


%\bibitem{Glick} I. Glicksberg, 
%\emph{Stone-\v Cech compactifications of products}, 
%Trans. Amer. Math. Soc. 90 (1959), 369--382.


\bibitem{Biorthog}P. H\'ajek, V. Montesinos Santaluc\'ia, J. Vanderwerff, V. Zizler
\emph{Biorthogonal Systems in Banach Spaces}
CMS Books in Mathematics,Springer-Verlag New York, 2008


\bibitem{BillJohnson} W. B. Johnson,
\emph{A complementary universal conjugate Banach space and its relation to the approximation problem}. 
Israel J. Math. 13 (1972), 301--310.


\bibitem{KKL}T. Kania, P. Koszmider, N. J. Laustsen,
\emph{A weak$^*$-topological dichotomy with applications in operator theory},
Trans. London Math. Soc. (2014), 1--28

\bibitem{Khmyleva} T.E. Khmyleva, 
 \emph{On the isomorphism of spaces of bounded continuous functions}. 
(Russian. English summary) Investigations on linear operators and the theory of
functions, XI. Zap. Nauchn. Sem. Leningrad. Otdel. Mat. Inst. Steklov. (LOMI)
113, (1981), 243--246. Translated in Journal of Soviet Mathematics, 22, (1983) 1860--1862.

%\bibitem{Kaltonellinfty} N. J. Kalton,
%\emph{Lipschitz and uniform embeddings into $\ell_\infty$},
%Fund. Math. 212 (2011), 53--69.


\bibitem{LeungRabi}D. Leung, F. R\"abiger, 
\emph{Complemented copies of $c_0$ in $\ell_{\infty}$-sums of Banach spaces}. 
Illinois J. of Math. 34 (1990), 52--58.


\bibitem{Lind} J. Lindenstrauss,
\emph{On complemented subspaces of $m$}
Israel J. Math., 5, No. 3,  (1967), 153--156.


\bibitem{Mil} A. A. Miljutin, 
\emph{Isomorphisms of spaces of continuous functions on compacts of power
continuum}, 
Tieoria Func. (Kharkov), 2 (1966), 150--156 (Russian).


\bibitem{SuperPel} A. Pe{\l}czy\'nski, 
\emph{On $C(S)$ subspaces of separable Banach spaces}, 
Studia Math. 31 (1968), 513--522.

\bibitem{Plebanek1}G. Plebanek, 
\emph{On isomorphisms of Banach spaces of continuous functions}, 
Israel J. Math. 209 (2015), 1--13.

\bibitem{Plebanek2}G. Plebanek, 
\emph{On positive embeddings of $C(K)$ spaces}, 
Studia Math. 216 (2013), 179--192.

%\bibitem{Rose2}H. P. Rosenthal, 
%\emph{On injective Banach spaces and the spaces $L_\infty(\mu)$ for finite measure $\mu$},
%Acta Math. 124 (1970), 205--248.

%\bibitem{Rose}H. P. Rosenthal, 
%\emph{The Banach spaces $C(K)$}, 
%Handbook of the geometry of Banach spaces,
%Vol. 2, 1547-1602, North-Holland, Amsterdam, 2003.

\bibitem{Ryan}R. A. Ryan, 
\emph{Complemented copies of $c_0$ in spaces of compact operators}, 
Proc. Roy. Irish Acad.Sect. A91 (1991), no. 2, 239--241.



%\bibitem{RyanTensor}R. A. Ryan, 
%\emph{Introduction to Tensor Products of Banach Spaces}. 
%Springer Monographs in Mathematics, London New York (2002). 


\bibitem{SaabSaab}E. Saab, P. Saab, 
\emph{On complemented copies of $c_0$ in injective tensor products}, 
Contemporary Mathematics, 52, Amer. Math. Soc., 1986, 131--135.

\bibitem{Se}  Z. Semadeni,
\emph{Banach Spaces of Continuous Functions Vol. I},  
Monografie Matematyczne, Tom 55. Warsaw, PWN-Polish Scientinfic Publishers, Warsaw, 1971.

\bibitem{Todor} S. Todor\v cevi\'c, 
\emph{Biorthogonal systems and quotient spaces via Baire category methods}, 
Math. Ann. 335 (2006), no. 3, 687--715.

\end{thebibliography}

\end{document}